\definecolor{commentscolor}{rgb}{0,0.6,0}
\definecolor{keywordscolor}{HTML}{0512FF}
\definecolor{stringscolor}{HTML}{AB354D}
\definecolor{backcolor}{HTML}{FFFFFF}
\definecolor{framecolor}{HTML}{CCCCCC}
\lstdefinestyle{mystyle}{
    frame=single,
    framesep=5pt,
    %rulecolor=\color{framecolor},
    rulecolor=\color{backcolor},
    upquote=true,
    backgroundcolor=\color{backcolor},   
    commentstyle=\color{commentscolor},
    keywordstyle=\color{keywordscolor},
    stringstyle=\color{stringscolor},
    basicstyle=\ttfamily,
    breakatwhitespace=false,         
    breaklines=true,                 
    captionpos=b,                    
    keepspaces=true,                                    
    showspaces=false,                
    showstringspaces=false,
    showtabs=false,                  
    tabsize=4
}
\def\lst@outputspace{{\ifx\lst@bkgcolor\empty\color{white}\else\lst@bkgcolor\fi\lst@visiblespace}}
\definecolor{1}{HTML}{FF0303}
\definecolor{2}{HTML}{2323FF}
\definecolor{3}{HTML}{CB9154}
\definecolor{4}{HTML}{6AA84F}
\definecolor{5}{HTML}{BA54FF}
\definecolor{6}{HTML}{A9537E}
\definecolor{7}{HTML}{ED7E0F}
\newcommand{\1}{\color{1}1\color{black}}
\newcommand{\2}{\color{2}2\color{black}}
\newcommand{\3}{\color{3}3\color{black}}
\newcommand{\4}{\color{4}4\color{black}}
\newcommand{\5}{\color{5}5\color{black}}
\newcommand{\6}{\color{6}6\color{black}}
\newcommand{\7}{\color{7}7\color{black}}
\newcommand{\s}{\hphantom{1}}
\newcommand{\defin}[1]{\emph{\textcolor{ForestGreen}{#1}}}
\def\thm@space@setup{
  \thm@preskip=4mm
  \thm@postskip=0mm
}
\theoremstyle{plain} %% default 
\newtheorem{theorem}{Theorem}
\newtheorem{conjecture}[theorem]{Conjecture}
\newtheorem{lemma}[theorem]{Lemma}
\theoremstyle{remark} 
\newcommand{\lat}[1]{\mathcal{B}_{#1}}
\newcommand{\mlat}[1]{\mathcal{M}_{#1}}
\newcommand{\mn}[2]{\mathcal{M}_{#1, #2}}
\newcommand{\cgA}{\mathcal{A}} 
\newcommand{\calA}{\mathcal{A}}
\newcommand{\calL}{\mathcal{L}}
\newcommand{\calM}{\mathcal{M}}
\newcommand{\calS}{\mathcal{S}} 
\newcommand{\cgS}{\mathcal{S}}
\newcommand{\ldim}{\operatorname{ldim}}
\newcommand{\bdim}{\operatorname{bdim}}
\let\leq\leqslant
\let\geq\geqslant
\let\subset\subseteq
\let\epsilon\varepsilon
\let\setminus\backslash
\let\old@setaddresses\@setaddresses 
\def\@setaddresses{\bigskip\bgroup\parindent 0pt\let\scshape\relax\old@setaddresses\egroup}
\begin{document} 
\title[Local dimension of a Boolean lattice] 
{Local dimension of a Boolean lattice}

\email{jedrzej.hodor@gmail.com}
\email{jakubsordyl1@gmail.com}

\author[J.~Hodor]{Jędrzej Hodor}
\address[J.~Hodor]{Theoretical Computer Science Department, 
Faculty of Mathematics and Computer Science and  Doctoral School of Exact and Natural Sciences, Jagiellonian University, Krak\'ow, Poland}

\author[J.~Sordyl]{Jakub Sordyl}
\address[J.~Sordyl]{Theoretical Computer Science Department\\ 
  Jagiellonian University\\
  Kraków, Poland}

\thanks{J.~Hodor is supported by the National Science Center of Poland under grant UMO-2022/47/B/ST6/02837 within the OPUS 24 program.}

%\keywords{Poset, dimension, local dimension, Boolean lattice}

\begin{abstract}
    For every integer $n$ with $n \geq 4$, we prove that the local dimension of a poset consisting of all the subsets of $\{1,\dots,n\}$ equipped with the inclusion relation is strictly less than $n$, answering a question of Kim, Martin, Masařík, Shull, Smith, Uzzell, and Wang (Eur.\ J.\ Comb.\ 2020).
    We also study several related problems.
\end{abstract}

\maketitle

% =======================================================
\section{Introduction}\label{sec:introduction}
% =======================================================
A key measure of complexity of partially ordered sets is dimension introduced in 1941 by Dushnik and Miller~\cite{DM41}.
Throughout the years, many variants of dimension emerged.
In this paper, we study local dimension, which was proposed in 2016 by Ueckerdt~\cite{Ueckerdt16}.
Let us start by defining the above concepts.

\subsection{Definitions}
A \defin{partially ordered set}, or \defin{poset} for short, is an ordered pair $P = (X, \leq)$, where $X$ is a non-empty set of elements called the \defin{ground set} of $P$, and $\leq$ is a binary relation on $X$ (called the \defin{order relation} in $P$), which is reflexive, antisymmetric and transitive. 
We do not require ground sets to be finite.
For two posets $P$ and $Q$, we say that $Q$ is a \defin{subposet} of $P$ (denoted by $Q \subseteq P$) if the ground set of $Q$ is a subset of the ground set of $P$ and the order relation of $Q$ is the restriction of the order relation in $P$ to the ground set of $Q$.
Let $P$ be a poset.
Sometimes, we replace the phrase $x \leq y$ in $P$ with \defin{$x \leq_P y$}. 
We say that two elements $x,y \in P$ are \defin{comparable} if $x \leq_P y$ or $y \leq_P x$. 
A poset, where all pairs of elements are comparable is called a \defin{linear order}. 
A poset $L$ is a \defin{linear extension} of $P$ if $L$ is a linear order on the ground set of $P$ such that $x \leq_L y$ whenever $x \leq_P y$ for every two elements $x$ and $y$ in $P$. 
A \defin{partial linear extension} of $P$ is a linear extension of a subposet of $P$. 
A \defin{local realizer} of $P$ is a family $\mathcal{L}$ of partial linear extensions of $P$ such that for all $x,y \in P$, there exists $L \in \mathcal{L}$ with $x,y \in L$ and
 \[ x  \leq_P y  \text{ if and only if } \text{there is no } L \in \mathcal{L} \text{ with }y <_L x.\]
 The \defin{size} of a local realizer $\mathcal{L}$ of $P$ is the number of elements of $\mathcal{L}$.
A \defin{realizer} of $P$ is a local realizer of $P$ where every element is a linear extension of $P$.
The \defin{dimension} of $P$ is the minimum size of a realizer of $P$, and is denoted by \defin{$\dim(P)$}.
 The \defin{frequency} of an element $x \in P$ in a local realizer $\mathcal{L}$ of $P$ is the number of $L \in \mathcal{L}$ with $x \in L$ and the \defin{frequency} of $\mathcal{L}$ is the maximum frequency of an element of $P$ in $\mathcal{L}$.
The \defin{local dimension} of $P$ is the minimum frequency of a local realizer of $P$, and is denoted by \defin{$\ldim(P)$}. 
By definition, $\ldim(P) \leq \dim(P)$.

\subsection{Local dimension of a Boolean lattice}
For a positive integer $n$, the \defin{Boolean lattice} of \defin{order} $n$, denoted by \defin{$\lat{n}$}, is the poset on all the subsets of $[n] = \{1,\dots,n\}$ ordered by the inclusion relation.
Basics of the posets' dimension theory yield that $\ldim(\lat{n}) \leq \dim(\lat{n}) = n$.
Note however, that in some sense the full complexity of a Boolean lattice is not necessary for $\dim(\lat{n}) = n$ to hold.
Namely, dimension of a subposet of the Boolean lattice of order $n$ consisting only of singletons and cosingletons (elements of size $1$ and $n-1$, respectively) is already equal to $n$.
Kim, Martin, Masařík, Shull, Smith, Uzzell, and Wang~\cite{KIM2020103074}, and subsequently Lewis~\cite{Lewis20} asked if $\ldim(\lat{n}) = n$ for every $n$.
We answer this question in negative.

\begin{theorem}\label{th:upper_bound}
    For every integer $n$ with $n \geq 4$,
    $$\ldim(\lat{n}) < n.$$
\end{theorem}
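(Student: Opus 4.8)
The plan is to isolate the single base case $n=4$ and propagate it to all larger $n$ via a product inequality for local dimension. The structural fact I would prove first is that $\ldim(P\times Q)\le\ldim(P)+\ldim(Q)$ for all posets $P,Q$, where $P\times Q$ denotes the Cartesian product. Fix local realizers $\mathcal{L}_P$ and $\mathcal{L}_Q$ witnessing $\ldim(P)$ and $\ldim(Q)$, and fix arbitrary linear extensions $N_P$ of $P$ and $N_Q$ of $Q$. For $L\in\mathcal{L}_P$, with ground set $D_L$, let $\widehat L$ be the partial linear extension of $P\times Q$ on $D_L\times Q$ with $(p,q)<_{\widehat L}(p',q')$ iff $p<_L p'$, or $p=p'$ and $q<_{N_Q}q'$; symmetrically define $\check M$ for $M\in\mathcal{L}_Q$, using $M$ on the second coordinate and $N_P$ inside each fibre. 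Each $\widehat L$ and $\check M$ is a partial linear extension of $P\times Q$, and $\{\widehat L : L\in\mathcal{L}_P\}\cup\{\check M : M\in\mathcal{L}_Q\}$ is a local realizer of it: $(p,q)$ and $(p',q')$ lie in a common $\widehat L$ whenever $p,p'$ lie in a common member of $\mathcal{L}_P$, and if $(p,q)\not\le(p',q')$ then either $p\not\le_P p'$, whence some $\widehat L$ puts $(p',q')$ below $(p,q)$, or $q\not\le_Q q'$, whence some $\check M$ does. As $(p,q)$ lies in $\widehat L$ only if $p\in D_L$ and in $\check M$ only if $q\in D_M$, its frequency is at most $\ldim(P)+\ldim(Q)$.

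Granting this, the theorem reduces to the base case $\ldim(\lat{4})\le 3$: for $n\ge 5$ one has $\lat{n}\cong\lat{4}\times\lat{n-4}$, so $\ldim(\lat{n})\le\ldim(\lat{4})+\ldim(\lat{n-4})\le 3+\dim(\lat{n-4})=3+(n-4)=n-1<n$, using $\ldim\le\dim$; and for $n=4$ the claim is exactly $\ldim(\lat{4})\le 3$.

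For the base case I would construct an explicit local realizer of $\lat{4}$ of frequency $3$. The natural starting point is the standard size-$4$ realizer, in which $L_i$ places every set avoiding $i$ below every set containing $i$ (refining inclusion inside each of the two parts), so that every element has frequency $4$. From each $L_i$ one deletes a well-chosen collection of sets, enough that every element of $\lat{4}$ disappears from at least one $L_i$, and then repairs the reversals this destroys by adjoining a bounded number of auxiliary partial linear extensions, each supported on a small part of $\lat{4}$ so that no element climbs back to frequency $4$. That the final family is a local realizer — every pair of sets lies in a common member, and for every $X\not\subseteq Y$ some member puts $Y$ below $X$ — is then a finite verification, the tight cases being the singletons, cosingletons, $\emptyset$, and $[4]$; equivalently, one may exhibit such a family outright and check it directly, possibly with computer assistance.

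The product inequality and the propagation are routine, so essentially all the content sits in the base case, and the difficulty there is genuine. Already each partial linear extension reverses at most one of the four pairs $(\{i\},[4]\setminus\{i\})$ — if it reversed both the one for $i$ and the one for $j$, then, using $\{i\}\subseteq[4]\setminus\{j\}$ and $\{j\}\subseteq[4]\setminus\{i\}$, one would get both $\{i\}<\{j\}$ and $\{j\}<\{i\}$ in that extension — so the reversal duty for these four pairs must be distributed over at least four of the extensions one uses. Arranging the deletions and the auxiliary extensions so that this is compatible with the uniform frequency cap of $3$, with the friction concentrated around the singletons and cosingletons, is the crux of the argument.
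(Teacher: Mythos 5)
Your reduction framework is exactly the paper's: subadditivity of local dimension over products (which you prove correctly --- the paper simply cites this as \cref{lemma:ldim_prod} from Kim et al.) plus the single base case $\ldim(\lat{4})\le 3$. Your propagation step is fine and even a little leaner than the paper's: you get $\ldim(\lat{n})\le \ldim(\lat{4})+\dim(\lat{n-4})=n-1$ directly from $\lat{n}\cong\lat{4}\times\lat{n-4}$ and $\ldim\le\dim$, whereas the paper iterates the product decomposition to obtain the stronger bound $\lceil 3n/4\rceil$; either suffices for the statement as written. Your observation that a single partial linear extension can reverse at most one of the four pairs $(\{i\},[4]\setminus\{i\})$ is also correct and correctly identifies where the difficulty concentrates.

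The gap is that the base case is never actually established. You describe a search strategy --- prune the standard four-linear-extension realizer of $\lat{4}$ and patch it with auxiliary partial linear extensions --- but you do not exhibit a frequency-$3$ local realizer of $\lat{4}$, nor give any argument that the delete-and-repair plan must succeed: repairing a destroyed reversal can push frequency back up at exactly the elements you pruned, and nothing you say rules out that this obstruction is unavoidable. Since, as you yourself note, essentially all of the content of the theorem sits in this lemma, the proof is incomplete without it. For comparison, the paper resolves it by exhibiting an explicit four-member local realizer of $\lat{4}$ of frequency $3$ (\cref{tab:partial_realizer_4}), found with a SAT solver and verified by a short script; deferring this step to ``a finite verification, possibly with computer assistance'' describes the task but does not discharge it.
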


Kim et al.~\cite{KIM2020103074} noted that if there exists $n_0$ such that $\ldim(\lat{n_0}) < n_0$, then $\ldim(\lat{n}) < n$ for every $n$ with $n \geq n_0$.
This is due to the subadditivity of the local dimension (see~\Cref{lemma:ldim_prod}) and a product structure of Boolean lattices.
We discuss this in more detail later in the paper.
We show that $\ldim(\lat{4}) \leq 3$ and $\ldim(\lat{7}) \leq 5$.
The former implies~\Cref{th:upper_bound} and the latter implies that $\ldim(\lat{n}) \leq \left\lceil\frac{5}{7} n\right\rceil$ for every positive integer $n$. 

Note that similar methods (computer search aided by SAT-solvers) were recently used by Briański, Hodor, La, Michno, and Micek~\cite{BDIM24} to show that $\bdim(\lat{n}) \leq \left\lceil\frac{5}{6} n\right\rceil$ for every positive integer $n$ where $\bdim(\cdot)$ denotes Boolean dimension, which is another well-studied variant of dimension of posets.
In both cases, it is an interesting challenge to find a more comprehensive proof.

\subsection{The problem of singletons}
The next step in understanding local dimension of a Boolean lattice would be to determine its asymptotic growth.
At least two proofs showing that $\ldim(\lat{n}) = \Omega(\frac{n}{\log n})$ are known: one by Kim et al.~\cite{KIM2020103074} and second by Dam\'{a}sdi, Felsner, Gir\~{a}o, Keszegh, Lewis, Nagy, and Ueckerdt~\cite{Felsner21} (later in the paper, we adapt this proof to solve a different problem, see~\cref{sec:into:multi}).
In fact, the proof of Dam\'{a}sdi et al.\ gives something stronger.
For a positive integer $n$, let \defin{$\lat{n}^1$} be the poset on all nonempty subsets of $[n]$ such that for two such subsets $A$ and $B$, we have $A \leq B$ if and only if $|A| = 1$ and $A \subset B$.
The mentioned proof yields that $\ldim(\lat{n}^1) = \Omega(\frac{n}{\log n})$.
We prove that this bound is asymptotically tight, i.e.\ $\ldim(\lat{n}^1) = \Theta(\frac{n}{\log n})$.

\begin{theorem}\label{th:lat_1_n}
    For every positive integer $n$, 
        \[(1-o(1)) \cdot \frac{n}{\log n} \leq \ldim(\lat{n}^1) \leq 2 \cdot \frac{n}{\log n}+3.\]
\end{theorem}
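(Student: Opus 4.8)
The plan is to handle the two inequalities separately: almost all of the effort goes into the upper bound, while for the lower bound I would follow Dam\'{a}sdi et al.~\cite{Felsner21}.

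For the upper bound I would first isolate the pairs that actually constrain a local realizer of $\lat{n}^{1}$. Co-occurrence of any two elements, the reversal of any two incomparable singletons, and the reversal of any two incomparable sets of size $\ge 2$ can all be arranged by two global linear extensions $L^{+}$ and $L^{-}$ that list all singletons first (in mutually reversed orders) and then all larger sets (again in mutually reversed orders). What is left is: for every $i\in[n]$ and every $S$ with $i\notin S$ and $|S|\ge 2$, some partial linear extension must place $S$ below $\set{i}$. The key observation is that for any $I\subseteq[n]$ and any family $\mathcal{T}$ of sets each disjoint from $I$, the order listing the sets of $\mathcal{T}$ and then the singletons of $I$ is a valid partial linear extension of $\lat{n}^{1}$ in which every $S\in\mathcal{T}$ lies below every $\set{i}$ with $i\in I$ (there are no forced comparabilities among these elements). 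This reduces the whole construction to a clean combinatorial statement: it suffices to find a family $\mathcal{I}$ of subsets of $[n]$ such that (a) every nonempty proper subset $R$ of $[n]$ is a union of at most $D$ members of $\mathcal{I}$ that are contained in $R$, and (b) every element of $[n]$ lies in at most $D$ members of $\mathcal{I}$. Indeed, given such an $\mathcal{I}$, for each larger set $S$ fix a cover of $[n]\setminus S$ by at most $D$ members of $\mathcal{I}$ contained in $[n]\setminus S$, form one extension $L_{I}$ for each $I\in\mathcal{I}$ consisting of the sets assigned to $I$ placed below the singletons of $I$, and add $L^{+}$ and $L^{-}$; the result is a local realizer of frequency at most $D+2$, so $\ldim(\lat{n}^{1})\le D+2$. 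It then remains to construct $\mathcal{I}$ with $D\le 2n/\log n + 1$.

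The construction of $\mathcal{I}$ is the step I expect to be the main obstacle. Heuristically the parameters are forced to be tight: a member used in an efficient cover of a large $R$ should have size about $\log n$, since then the number of pieces needed is about $|R|/\log n$, while a roughly regular family of $(\log n)$-sets has degree about $2^{\log n}/\log n = n/\log n$. The real difficulty is robustness: a bounded-degree family passes only few large members through each point, hence few members lie inside a generic large $R$; so $\mathcal{I}$ must be specifically engineered so that for \emph{every} $R$ the members contained in $R$ already cover $R$ near-perfectly — equivalently, $\mathcal{I}$ must remain a good covering family after deleting an arbitrary small set of ground elements (a property that a random family fails, losing a logarithmic factor in the covering number). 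I would build $\mathcal{I}$ by an explicit structured construction — a recursion that glues good families on smaller ground sets together through an interleaving step, with all singletons thrown in to deal with tiny $R$ — and the bulk of the work is the bookkeeping that pins the degree down to $2n/\log n$; getting the exact constant $2$ (rather than the $1$ that the lower bound would permit) appears to be the real price of the gluing step.

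For the lower bound I would use the counting argument of Dam\'{a}sdi et al.~\cite{Felsner21}. If $\mathcal{L}$ is a local realizer of $\lat{n}^{1}$ of frequency $d$, then for each $i$ the at most $d$ extensions containing $\set{i}$ must, between them, place $\set{i}$ above all $2^{n-1}$ sets not containing $i$; quantifying the limited amount of such ``resolving'' information a frequency-$d$ realizer can encode already yields $d=\Omega(n/\log n)$, and carrying the estimates with explicit constants improves this to $d\ge(1-o(1))\,n/\log n$, completing the proof.
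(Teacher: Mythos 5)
Your framework is sound and is in fact the same as the paper's: two mutually reversed global linear extensions $L^{+},L^{-}$ (singletons first, then the larger sets) handle co-occurrence and all incomparable pairs of equal type, and the remaining task --- witnessing $S<\{i\}$ for every $i\notin S$ with $|S|\ge 2$ --- reduces, exactly as you say, to producing a family $\mathcal{I}$ satisfying your conditions (a) and (b); your check that each $L_I$ is a legitimate partial linear extension and that the resulting local realizer has frequency at most $D+2$ is correct. The problem is that you stop at the one step that carries all the content: you never construct $\mathcal{I}$. You explicitly flag it as ``the main obstacle'' and offer only a vague recursive gluing scheme whose bookkeeping you do not carry out, so as written the upper bound is not proved.

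Moreover, the difficulty you anticipate is illusory: no recursion, interleaving, or ``robustness engineering'' is needed. Partition $[n]$ into $r=\lceil n/d\rceil$ blocks $I_1,\dots,I_r$ of size at most $d:=\lceil \log n-\log\log n\rceil$ and let $\mathcal{I}$ be the collection of all nonempty subsets of the blocks. Condition (a) holds with $D=r$, since any nonempty $R\subseteq[n]$ is the union of the at most $r$ nonempty sets $R\cap I_1,\dots,R\cap I_r$, each of which belongs to $\mathcal{I}$ and is contained in $R$; condition (b) holds with $D=2^{d-1}$, since $x\in I_j$ lies only in those members that are subsets of $I_j$ containing $x$. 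Both quantities are at most $2n/\log n+1$ for this choice of $d$ (the factor $2$ coming merely from the ceiling on $d$), giving frequency at most $2n/\log n+3$. This is precisely the paper's construction: its $L_{i,J}$ is your $L_J$ for $J\subseteq I_i$, with each large set $A$ assigned, within block $i$, to the member $J=I_i\setminus A$. Your worry that a bounded-degree family leaves few members inside a generic $R$ dissolves because this family is closed under intersecting with an arbitrary $R$ block by block. The lower bound you delegate to Dam\'{a}sdi et al., which is exactly what the paper does, so that part is fine.
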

For the proof of the first inequality, see~\cite[Section~3.1]{Felsner21}.
With slightly worse constants it also follows from~\Cref{lemma:mn_lower_bound} applied with $m=2$.

\subsection{Size of local realizers}
In the study of local dimension it is always emphasized that the size of a local realizer may vastly exceed its frequency.
We are wondering if this is necessarily the case for optimal local realizers.
Our study led us to the following very bold conjecture.

\begin{conjecture}\label{conj}
	There exists a function $f$ such that for every poset $P$, there exists a local realizer $\mathcal{L}$ of $P$ such that the frequency of $\mathcal{L}$ is exactly $\ldim(P)$ and the size of $\mathcal{L}$ is at most $f(\dim(P))$.
\end{conjecture}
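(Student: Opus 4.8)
The plan is to first reformulate what a local realizer must accomplish and then to try to compress an optimal one down to bounded size; since this is only a conjecture, the following is necessarily speculative. Recall that a local realizer $\mathcal{L}$ of $P$ must do two things: (a) place both elements of every comparable pair of $P$ into some common $L \in \mathcal{L}$, and (b) for every incomparable ordered pair $(a,b)$, contain some $L \in \mathcal{L}$ reversing it, i.e.\ with $b <_L a$. Since every realizer of $P$ of size $\dim(P)$ is, in particular, a local realizer of frequency $\dim(P)$ and size $\dim(P)$, \Cref{conj} is trivial, with $f$ the identity, whenever $\ldim(P) = \dim(P)$. The entire content lies in posets with $\ldim(P) < \dim(P)$, the smallest Boolean lattice with this property being $\lat{4}$ (see \Cref{th:upper_bound}).

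Fix such a $P$, put $d := \dim(P)$, and fix a realizer $E_1, \dots, E_d$ of $P$ together with reversible sets $R_1, \dots, R_d$ of incomparable ordered pairs such that $E_i$ reverses every pair of $R_i$ and every incomparable ordered pair lies in some $R_i$. For any $D \subseteq P$, the restriction of $E_i$ to $D$ is a valid partial linear extension reversing precisely the pairs of $R_i$ contained in $D \times D$. So a natural first construction is: assign each incomparable ordered pair to a class $R_i$ containing it, let $D_i$ be the set of elements occurring in pairs assigned to $R_i$, take the restrictions of $E_1, \dots, E_d$ to $D_1, \dots, D_d$, and add one full linear extension of $P$ to witness all comparable pairs. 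This is a local realizer of size $d+1$, and its frequency is at most one more than the maximum, over $x \in P$, of the number of classes in which $x$ is active. Thus one route to \Cref{conj} would be to always choose the reversible decomposition and the assignment so that every element is active in few classes.

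The obstacle is that this quantity is only an upper bound on $\ldim(P)$, and can be strictly larger, because an optimal local realizer may use partial linear extensions that are restrictions of no linear extension of $P$, exploiting reversals that are merely locally consistent. To chase the true optimum one should instead start from an optimal local realizer $\mathcal{L}$, of possibly gigantic size, and compress it. The key observation is that two extensions $L, L' \in \mathcal{L}$ can be merged into a single linear extension of $P$ restricted to $\mathrm{dom}(L) \cup \mathrm{dom}(L')$ refining both, provided $L$ and $L'$ agree on $\mathrm{dom}(L) \cap \mathrm{dom}(L')$ and the relation $<_L \cup <_{L'} \cup <_P$ on $\mathrm{dom}(L) \cup \mathrm{dom}(L')$ is acyclic; such a merge produces again a local realizer, never increases any element's frequency, never destroys a reversal (a destroyed reversal would be precisely a disagreement, which is excluded), and never destroys a witnessed comparable pair. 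Iterating yields a merge-minimal optimal local realizer, and the remaining task is to bound its size by a function of $d$, the hope being that from sufficiently many pairwise non-mergeable extensions one can extract, by a sunflower or Ramsey-type argument, either an alternating cycle in $P$ witnessing $\dim(P) > d$ or a set of elements forcing frequency above $\ldim(P)$.

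The difficulty I expect to dominate is that the two obstructions to a merge are of entirely different natures: blocking by an alternating cycle is governed by $\dim(P)$, but blocking by an honest disagreement of $L$ and $L'$ on a common incomparable pair carries no order-theoretic content whatsoever, and I see no a priori reason that a merge-minimal optimal realizer cannot contain unboundedly many pairwise-disagreeing extensions. Circumventing this seems to require first re-sorting the extensions — rechoosing the linear order inside each fixed domain so as to eliminate avoidable disagreements while preserving the needed reversals — and I expect this to be the technically hardest and most novel step. As intermediate targets I would settle \Cref{conj} for concrete infinite families of bounded dimension, such as large disjoint unions of a fixed poset whose local dimension is below its dimension, and separately prove the (also open) compactness statement that for an infinite poset $P$ the value $\ldim(P)$ is already attained by some finite local realizer, which is in any case a natural prerequisite for a uniform size bound.
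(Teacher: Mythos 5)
This statement is a conjecture: the paper does not prove it, and neither do you, so there is no proof here to certify. Your write-up is explicit about being a speculative sketch, and judged as such it is mostly sound: the observation that \Cref{conj} is immediate with $f$ the identity whenever $\ldim(P)=\dim(P)$ is correct; the size-$(d+1)$ construction (one full linear extension plus the restrictions of a realizer to the supports of the reversible classes) is indeed a local realizer, whose frequency however need not equal $\ldim(P)$, exactly as you say; and the merging operation you describe (merge $L$ and $L'$ when they agree on $\mathrm{dom}(L)\cap\mathrm{dom}(L')$ and ${<_L}\cup{<_{L'}}\cup{<_P}$ is acyclic) does preserve the local-realizer property and never increases any element's frequency, since reversals of incomparable pairs survive refinement and newly created orderings of incomparable pairs are harmless. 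The gap you yourself flag --- that merge-minimality gives no bound, in terms of $\dim(P)$, on the number of pairwise-disagreeing extensions --- is genuine and is exactly where the argument stops being a proof; nothing in your sketch closes it.

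One substantive point you miss: the paper's own \Cref{th:lat_1_n_short_realizer}, combined with the upper bound in \Cref{th:lat_1_n}, already shows that any $f$ witnessing \Cref{conj} must be superlinear, since every local realizer of $\lat{n}^1$ of size at most $c\dim(\lat{n}^1)=cn$ has frequency at least $\frac{n}{2(c+1)}$, which for large $n$ exceeds $\ldim(\lat{n}^1)\leq 2\frac{n}{\log n}+3$. This rules out any refinement of your first construction that keeps the size linear in $d$, and it means the ``re-sorting'' step you anticipate cannot terminate with boundedly many extensions per dimension class; the posets $\lat{n}^1$ are therefore a sharper test case for your program than the disjoint-union families you propose as intermediate targets. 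Your suggested compactness statement (that for infinite $P$ an optimal frequency is attained by some local realizer, finite or otherwise controlled) is a reasonable separate question, but it is independent of the size bound the conjecture actually demands.
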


We show that if the conjecture holds, then the function $f$ must be superlinear.

\begin{theorem}\label{th:lat_1_n_short_realizer}
    Let $c$ and $n$ be positive integers with $n \geq 2$.
    If $\mathcal{L}$ is a local realizer of $\lat{n}^1$ of size at most $cn$, then the frequency of $\mathcal{L}$ is at least $\frac{n}{2(c+1)}$.
\end{theorem}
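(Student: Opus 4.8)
The plan is to prove the equivalent estimate
\[
  2k\,(s+n)\ \ge\ n^{2},
\]
where $s:=|\mathcal{L}|$ and $k$ is the frequency of $\mathcal{L}$: since $s\le cn$ this yields $k\ge \frac{n^{2}}{2(s+n)}\ge\frac{n}{2(c+1)}$. For $l\in[n]$ set $S_{l}:=\{L\in\mathcal{L}:\{l\}\in L\}$, so $|S_{l}|\le k$, and for $L\in S_{l}$ let $U_{L}(l)$ be the set of $m\in[n]$ with $\{m\}\in L$ and $\{m\}>_{L}\{l\}$.

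First I would harvest the consequences of the local realizer property for pairs of the form (singleton, larger set). If $l\notin B$ then $\{l\}$ and $B$ are incomparable in $\lat{n}^{1}$, so there is $L\in S_{l}$ with $B\in L$ and $B<_{L}\{l\}$; every singleton of $B$ present in $L$ then lies below $B$, hence below $\{l\}$, so $B\cap U_{L}(l)=\varnothing$. Applying this with $B=[n]\setminus\{l\}$ shows that some $L\in S_{l}$ has $U_{L}(l)=\varnothing$, i.e.\ $\{l\}$ is the top present singleton of $L$; since $\{l\}$ cannot be the top present singleton of the same $L$ for two different $l$, these extensions are pairwise distinct, giving the easy bound $s\ge n$ (this is the source of the summand $n$ in $s+n$). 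Running the same idea over all $B\subseteq[n]\setminus\{l\}$ and counting gives, for each $l$, the sharpening $\sum_{L\in S_{l}}2^{-|U_{L}(l)|}\ge 1-o(1)$, which I expect to be what is needed in the regime where $s$ is much larger than $n$.

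The quadratic contribution is supplied by the $2$-element sets. Fix $e=\{i,j\}$ and $l\notin e$. The incomparable pair $(e,\{l\})$ forces an extension containing $e$ with $e<_{L}\{l\}$ and one with $\{l\}<_{L}e$; both lie in $\mathcal{L}(e):=\{L\in\mathcal{L}:e\in L\}$, which has at most $k$ members. In the first, $\{l\}$ occurs in $L$ above $e$; in the second, below $e$. Letting $l$ range over the $n-2$ vertices outside $e$ and adding the two tallies, one gets $\sum_{L\in\mathcal{L}(e)}\#\{m\notin\{i,j\}:\{m\}\in L\}\ \ge\ 2(n-2)$; summing over the $\binom{n}{2}$ choices of $e$ then yields
\[
  \sum_{L\in\mathcal{L}}|E(L)|\cdot|T(L)|\ \ge\ 2(n-2)\binom{n}{2},
\]
where $E(L)$ and $T(L)$ are, respectively, the collections of $2$-element sets and of singletons appearing in $L$.

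The heart of the matter is to bound the left-hand side from above. The naive inequality $|T(L)|\le n$ together with $\sum_{L}|E(L)|\le k\binom{n}{2}$ only gives $k\ge 2$, so one must use that the extensions of $S_{l}$ cannot all push $\{l\}$ near the top. Concretely, for every $2$-subset $\{a,b\}$ of $[n]\setminus\{l\}$ some $L\in S_{l}$ must have $\{a,b\}<_{L}\{l\}$, so the sets $\bigl([n]\setminus\{l\}\bigr)\setminus U_{L}(l)$ with $L\in S_{l}$ form a clique cover of the complete graph on $[n]\setminus\{l\}$. I would feed this covering estimate, together with the frequency bound on $2$-element sets and the distinctness of the $n$ cosingleton extensions from the first step, back into the displayed inequality to extract $2k(s+n)\ge n^{2}$. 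The main obstacle I anticipate is precisely this last reduction — converting ``for every $l$, $S_{l}$ clique-covers the complete graph on $[n]\setminus\{l\}$'' plus the frequency cap into a single linear inequality in $k$, $s$ and $n$ — and it is also where the constant $2$ and the shift by $n$ in the denominator should get pinned down.
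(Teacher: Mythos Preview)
Your proposal has a genuine gap, and it is exactly where you flag it. The ingredients you assemble do not combine to give $2k(s+n)\ge n^2$, and in fact two of them carry no information. Once you have shown that for each $l$ some $L\in S_l$ has $U_L(l)=\varnothing$, that single $L$ already satisfies $([n]\setminus\{l\})\setminus U_L(l)=[n]\setminus\{l\}$, so your clique-cover condition on $K_{[n]\setminus\{l\}}$ is met by this one clique alone; likewise the term $2^{-|U_L(l)|}=1$ for that same $L$ already makes $\sum_{L\in S_l}2^{-|U_L(l)|}\ge 1$. Both ``sharpenings'' are therefore automatic consequences of the cosingleton step and add nothing. That leaves you with $s\ge n$ and the $2$-set inequality $\sum_L|E(L)|\,|T(L)|\ge 2(n-2)\binom{n}{2}$, but the only upper bound available on the left is $n\cdot\sum_L|E(L)|\le kn\binom{n}{2}$, which yields just $k\ge 2-o(1)$. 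There is no evident way to inject $s$ into an upper bound for $\sum_L|E(L)|\,|T(L)|$, so the approach stalls.

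The paper's argument is short and quite different. Define a graph $G$ on $[n]$ by putting an edge $ij$ whenever some $L\in\cgL$ has $\{i\}$ and $\{j\}$ as its two \emph{highest} singletons; then $|E(G)|\le|\cgL|\le cn$. The key claim is that every independent set $I$ of $G$ forces the frequency of the single element $A:=[n]\setminus I$ to be at least $|I|$: for each $x\in I$ pick $L_x$ with $A<_{L_x}\{x\}$; if $L_x=L_y$ for distinct $x,y\in I$, then in that $L$ every singleton from $A$ lies below $A$, so the top two singletons of $L$ both lie in $I$, contradicting independence. Tur\'an's theorem applied to the complement of $G$ then gives an independent set of size at least $n/(2(c+1))$. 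Your cosingleton step is the ``top-one-singleton'' version of this idea; the missing move is to pass to the top \emph{two} singletons, which is precisely what converts the structure into a sparse graph and lets Tur\'an finish the job.
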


Let us comment on why~\Cref{th:lat_1_n_short_realizer} indeed implies that the function $f$ in~\cref{conj} can not be linear.
Suppose to the contrary that~\cref{conj} holds with $f(n) = cn$ for some constant~$c$.
By~\Cref{th:lat_1_n}, we have $\ldim(\lat{n}^1) \leq C \frac{n}{\log n}$.
Certainly, there exists $n$ big enough so that $C \frac{n}{\log n} < \frac{n}{2(c+1)}$.
On the other hand, assumed~\cref{conj} implies that there is an optimal local realizer of size at most $c\dim(\lat{n}^1) = cn$, and so, by~\Cref{th:lat_1_n_short_realizer}, frequency at least $\frac{n}{2(c+1)}$, which is a contradiction.

\subsection{Local dimension of a lattice of multisets}\label{sec:into:multi}
We conclude with a minor complementary result.
Following Briański et al.~\cite{BDIM24}, we consider a natural extension of a Boolean lattice, i.e.\ a \defin{lattice of multisets}.
Namely, for every positive integer $n$, we define the poset \defin{$\mlat{n}$}, as the poset on the family of all multisets containing elements in $[n]$ equipped with the inclusion relation. 
In a multiset, we allow elements to have arbitrary multiplicities; thus, these posets have infinitely many elements. 
However, again, it is easy to see that $\dim(\mlat{n}) = n$ for every $n$.
Briański et al.~\cite{BDIM24} showed that $\bdim(\mlat{n}) = n$ for every positive integer $n$.
We give an analogous result.

\begin{theorem}\label{th:mlat_exact_ldim}
    For every positive integer $n$,
        \[\ldim(\mlat{n}) = n.\]
\end{theorem}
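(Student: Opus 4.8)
The plan is to pass to the isomorphic description $\mlat{n}\cong\bbN^n$, sending a multiset to its vector of multiplicities, and to treat the two inequalities separately. The upper bound is immediate: $\ldim(\mlat{n})\le\dim(\mlat{n})$ by definition, $\dim(\mlat{n})\le n$ because the $n$ ``coordinatewise lexicographic'' linear extensions $L_1,\dots,L_n$ (with $x<_{L_k}y$ iff $x_k<y_k$, or $x_k=y_k$ and $x$ precedes $y$ lexicographically) form a realizer, and $\dim(\mlat{n})\ge\dim(\lat{n})=n$ since $\lat{n}$ is a subposet of $\mlat{n}$. So everything reduces to proving $\ldim(\mlat{n})\ge n$, and we may assume $n\ge 2$ since $\mlat{1}$ is a chain.

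Suppose for contradiction that $\mathcal{L}$ is a local realizer of $\mlat{n}$ of frequency at most $n-1$. For $i\in[n]$ let $e_i$ be the multiset consisting of one copy of $i$, and for $t\ge 1$ let $\hat e_i^{\,t}$ be the multiset consisting of $t$ copies of every element of $[n]\setminus\set{i}$, so that $e_i\not\le\hat e_i^{\,t}$ while $e_i\le\hat e_j^{\,t}$ for $j\ne i$. Let $F(e_i)\subseteq\mathcal{L}$ be the set of partial linear extensions containing $e_i$; by assumption $|F(e_i)|\le n-1$. Since $e_i\not\le\hat e_i^{\,t}$, some $L\in\mathcal{L}$ has $\hat e_i^{\,t}<_L e_i$, and any such $L$ contains both $e_i$ and $\hat e_i^{\,t}$, hence lies in $F(e_i)$. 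As $t$ ranges over the infinite set $\set{1,2,\dots}$ while $F(e_i)$ is finite, pigeonhole yields a fixed $L_i\in F(e_i)$ and an infinite $T_i\subseteq\bbN$ with $\hat e_i^{\,t}\in L_i$ and $\hat e_i^{\,t}<_{L_i}e_i$ for all $t\in T_i$. I would strengthen this pigeonhole to fix simultaneously, for all $i$, also the extensions handling the ``forward'' pairs $(e_i,\hat e_i^{\,t})$ and the comparable pairs $(e_i,\hat e_j^{\,t})$ and $(e_j,\hat e_i^{\,t})$.

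The first step is a ``chain argument'': $L_1,\dots,L_n$ are pairwise distinct. Indeed, if $L_i=L_j=L$ with $i\ne j$, pick $t\in T_i$ and $s\in T_j$; then $e_i,e_j,\hat e_i^{\,t},\hat e_j^{\,s}\in L$, and inside the linear order $L$ we obtain $\hat e_i^{\,t}<_L e_i\le_L\hat e_j^{\,s}<_L e_j\le_L\hat e_i^{\,t}$ (using $e_i\le\hat e_j^{\,s}$ and $e_j\le\hat e_i^{\,t}$), a contradiction. The same computation within a single layer $t$ shows that no partial linear extension reverses two of the pairs $(e_i,\hat e_i^{\,t})$, which (with pigeonhole) re-derives $\dim(\mlat{n})\ge n$ from a copy of the standard example. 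The crucial—and, I expect, hardest—remaining step is to upgrade this bound on the \emph{number} of partial linear extensions to a bound on their \emph{frequency}: one must show these reversals cannot be spread out, i.e.\ that some single element of $\mlat{n}$ lies in at least $n$ members of $\mathcal{L}$.

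My approach would be to show that in fact $L_j\in F(e_i)$ for all $i,j$, so that $F(e_1)\supseteq\set{L_1,\dots,L_n}$ has size at least $n$, contradicting $|F(e_1)|\le n-1$. This requires controlling, for each $i$, which of the finitely many partial linear extensions through $\hat e_i^{\,t}$ can serve the comparable pairs $(e_j,\hat e_i^{\,t})$ and the incomparable pairs $(\hat e_i^{\,t},\hat e_j^{\,s})$. For $n=3$ this is short: $|F(\hat e_i^{\,t})|\le 2$ forces it to equal $\set{L_i,L_i'}$, where $L_i'$ handles the forward pair, the incomparable pairs $(\hat e_i^{\,t},\hat e_j^{\,s})$ then force $F(e_1)=F(e_2)=F(e_3)$, and a two-element set cannot contain three distinct $L_i$. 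For general $n$ the sets $F(\hat e_i^{\,t})$ may be larger, and this pinning-down must be replaced by a counting (entropy-type) estimate in the spirit of the lower bound of Dam\'asdi et al.\ for $\ldim(\lat{n}^1)$, adapted so that the infinitely many available scales $t$ compensate for the logarithmic loss present there; informally, each partial linear extension can absorb only a bounded portion of the ``demand'' at a given element, so a budget of $n-1$ per element does not suffice. Granting this, some element has frequency at least $n$, contradicting the choice of $\mathcal{L}$, and hence $\ldim(\mlat{n})=n$; the overall structure parallels the proof of Briański et al.\ that $\bdim(\mlat{n})=n$.
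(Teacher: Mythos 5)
Your upper bound and the observation that the reversing extensions $L_1,\dots,L_n$ are pairwise distinct are both correct, but these only recover $\dim(\mlat{n})\ge n$. The entire difficulty of the theorem is the step you label ``Granting this'': converting a lower bound on the \emph{number} of partial linear extensions into a lower bound on the \emph{frequency}. Your concrete plan for that step --- showing $L_j\in F(e_i)$ for all $i,j$, so that the single element $e_1$ already has frequency $n$ --- is not justified and is unlikely to be salvageable: nothing forces the reversals of the various critical pairs to pile up on one singleton, and the actual proof never exhibits any specific element of frequency $n$; it only derives a numerical inequality that the frequency must satisfy. Your $n=3$ argument leans on $|F(\hat e_i^{\,t})|\le 2$ pinning down the realizer completely, which, as you note yourself, does not generalize.

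What the paper actually does is the counting argument you gesture at but do not carry out. It truncates to the subposet $\mn{n}{m}$ of multisets with all multiplicities below $m$ (further restricted to a bipartite singleton/non-singleton version $\mn{n}{m}^1$), concatenates all partial linear extensions containing a generalized singleton into one long linear order, deletes the at most $d\cdot n(m-1)$ occurrences of singletons, and assigns to each non-singleton multiset $A$ the $0/1$ vector recording which of the resulting $k\le 2dn(m-1)$ gaps contain an occurrence of $A$. Injectivity of this signature map is exactly your reversal argument (if distinct $A$ and $B$ share a signature, a suitable singleton $X$ with $X\subset B$ and $X\not\subset A$ forces $A<_L X$ and hence $B<_L X$, a contradiction), and since each signature has at most $d$ nonzero entries one gets $m^n-n(m-1)\le n(3n^2m)^d$, i.e.\ $d\ge \frac{n\log m-\log n}{\log(3n^2m)}$, which tends to $n$ as $m\to\infty$ for fixed $n$. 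This is precisely where the unbounded multiplicities beat the logarithmic loss of the Boolean case. Until you execute an estimate of this kind, the proof has a genuine gap at its central step.
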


% =======================================================
\section{Local dimension of a Boolean lattice}\label{sec:upperbound}
% =======================================================

Let $P$ and $Q$ be two posets with ground sets $X$ and $Y$, respectively.
The \defin{product} of $P$ and $Q$, denoted by \defin{$P \times Q$}, is the poset with the ground set $X \times Y$, where for any two pairs $(x_1, y_1), (x_2, y_2) \in X \times Y$, we have $(x_1, y_1) \leq (x_2, y_2)$ in $P \times Q$ if and only if $x_1 \leq x_2$ in $P$ and $y_1 \leq y_2$ in $Q$. 
We define inductively \defin{powers} of a poset.
We let $P^1 = P$ and for an integer $n$ with $n \geq 2$, \defin{$P^n$} is defined as $P^{n-1} \times P$.

Kim et al.\ proved that local dimension is subadditive with respect to product in the following sense.

\begin{lemma}[Theorem 20 in \cite{KIM2020103074}]\label{lemma:ldim_prod}
    For every two posets $P$ and $Q$, we have $\ldim(P \times Q) \leq \ldim(P) + \ldim(Q)$.
\end{lemma}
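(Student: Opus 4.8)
The plan is to lift a local realizer of $P$ and a local realizer of $Q$ to a local realizer of $P \times Q$ by letting each lifted partial linear extension control a single coordinate. Write $X$ and $Y$ for the ground sets of $P$ and $Q$. We may assume $\ldim(P)$ and $\ldim(Q)$ are both finite, since otherwise the inequality is trivial. Fix a local realizer $\mathcal{L}_P$ of $P$ of frequency $\ldim(P)$ and a local realizer $\mathcal{L}_Q$ of $Q$ of frequency $\ldim(Q)$, and, by the order-extension principle, fix a linear extension $L_0$ of $Q$ and a linear extension $M_0$ of $P$ to use for tie-breaking.

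For each $L \in \mathcal{L}_P$ with ground set $S \subseteq X$, I would define a linear order $\widehat{L}$ on $S \times Y$ by the lexicographic rule: $(x_1,y_1) <_{\widehat{L}} (x_2,y_2)$ if either $x_1 <_L x_2$, or $x_1 = x_2$ and $y_1 <_{L_0} y_2$. This $\widehat{L}$ is a partial linear extension of $P \times Q$: if $(x_1,y_1) \le (x_2,y_2)$ in $P\times Q$ with both pairs in $S \times Y$, then $x_1 \le_P x_2$ forces either $x_1 <_L x_2$ or $x_1 = x_2$ (because $L$ extends the restriction of $P$ to $S$), and in the second case $y_1 \le_Q y_2$ gives $y_1 \le_{L_0} y_2$; either way $(x_1,y_1) \le_{\widehat{L}} (x_2,y_2)$. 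Symmetrically, for each $M \in \mathcal{L}_Q$ with ground set $T \subseteq Y$, define the partial linear extension $\widecheck{M}$ on $X \times T$ by comparing second coordinates via $M$ and breaking ties via $M_0$. Set $\mathcal{L} = \{\widehat{L} : L \in \mathcal{L}_P\} \cup \{\widecheck{M} : M \in \mathcal{L}_Q\}$.

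Next I would check that $\mathcal{L}$ is a local realizer of $P \times Q$. For the covering condition, given $(x_1,y_1),(x_2,y_2) \in P \times Q$, pick $L \in \mathcal{L}_P$ whose ground set contains both $x_1$ and $x_2$; then both pairs lie in the ground set $S \times Y$ of $\widehat{L}$. For the defining equivalence, if $(x_1,y_1) \le (x_2,y_2)$ in $P \times Q$, then no member of $\mathcal{L}$ reverses the pair because every member is a partial linear extension of $P \times Q$. Conversely, if $(x_1,y_1) \not\le (x_2,y_2)$, then by the definition of the product order either $x_1 \not\le_P x_2$ or $y_1 \not\le_Q y_2$; in the first case the local realizer property of $\mathcal{L}_P$ yields $L \in \mathcal{L}_P$ with $x_2 <_L x_1$, hence $(x_2,y_2) <_{\widehat{L}} (x_1,y_1)$, and the second case is handled symmetrically by some $\widecheck{M}$.

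Finally, the frequency count is immediate: a pair $(x,y)$ belongs to the ground set of $\widehat{L}$ exactly when $x$ belongs to the ground set of $L$, and to the ground set of $\widecheck{M}$ exactly when $y$ belongs to the ground set of $M$; hence the frequency of $(x,y)$ in $\mathcal{L}$ equals the frequency of $x$ in $\mathcal{L}_P$ plus the frequency of $y$ in $\mathcal{L}_Q$, which is at most $\ldim(P) + \ldim(Q)$. The step I would watch most carefully is the combination of the covering condition with the backward direction of the equivalence: each lifted extension only governs one coordinate, so one must use that every incomparability in a product poset is witnessed in a single coordinate — but this is precisely the definition of $P \times Q$, so no genuine obstacle arises and the argument is essentially bookkeeping (with a minor point being that the lifted ground sets $S \times Y$ and $X \times T$ may be infinite, which the definitions already permit).
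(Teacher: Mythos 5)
Your proof is correct, and it is the standard argument for this subadditivity result: lift each partial linear extension of $P$ to the product lexicographically (breaking ties in the second coordinate by a fixed linear extension of $Q$), do the symmetric thing for $Q$, and observe that every violated coordinate-wise comparability is reversed by one of the lifted orders while frequencies simply add. The paper itself does not reprove this lemma but cites it from Kim et al., where essentially this same construction is used, so there is nothing to flag beyond noting that your bookkeeping (covering, the two directions of the equivalence, and the frequency count) is all in order.
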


Therefore, in order to show that $\ldim(\lat{n}) < n$ for every $n$ with $n \geq n_0$, it suffices to prove that $\ldim(\lat{n_0}) < n_0$ (as also noted by Kim at el.).
Therefore, the following lemma yields~\Cref{th:upper_bound}.

\begin{table}
    \scriptsize
    \sffamily
    \centering
    \begin{tabular}{|c|c|c|c|}
        \hline
        \rule[-1.5ex]{0pt}{2pt} $L_1$ \rule{0pt}{3ex} & $L_2$ & $L_3$ & $L_4$ \\
        
        {[}\s\1\s\2\s\3\s\4{]} & {[}\s\1\s\2\s\3\s\4{]} & {[}\s\1\s\2\s\3\s\4{]} & {[}\s\s\s\2\s\s\s\4{]}\\
        {[}\s\1\s\s\s\3\s\4{]} & {[}\s\1\s\2\s\s\s\4{]} & {[}\s\1\s\s\s\3\s\4{]} & {[}\s\1\s\s\s\s\s\4{]}\\
        {[}\s\s\s\2\s\3\s\4{]} & {[}\s\1\s\2\s\3\s\s{]} & {[}\s\1\s\s\s\3\s\s{]} & {[}\s\s\s\s\s\s\s\4{]}\\
        {[}\s\s\s\s\s\3\s\4{]} & {[}\s\1\s\2\s\s\s\s{]} & {[}\s\1\s\2\s\s\s\4{]} & {[}\s\1\s\2\s\3\s\s{]}\\
        {[}\s\1\s\2\s\3\s\s{]} & {[}\s\s\s\2\s\3\s\4{]} & {[}\s\1\s\s\s\s\s\4{]} & {[}\s\1\s\s\s\3\s\s{]}\\
        {[}\s\s\s\2\s\3\s\s{]} & {[}\s\s\s\2\s\3\s\s{]} & {[}\s\1\s\s\s\s\s\s{]} & {[}\s\1\s\2\s\s\s\s{]}\\
        {[}\s\s\s\s\s\3\s\s{]} & {[}\s\s\s\2\s\s\s\4{]} & {[}\s\s\s\2\s\3\s\4{]} & {[}\s\1\s\s\s\s\s\s{]}\\
        {[}\s\1\s\2\s\s\s\4{]} & {[}\s\s\s\2\s\s\s\s{]} & {[}\s\s\s\s\s\3\s\4{]} & {[}\s\s\s\2\s\3\s\s{]}\\
        {[}\s\1\s\2\s\s\s\s{]} & {[}\s\1\s\s\s\3\s\4{]} & {[}\s\s\s\s\s\3\s\s{]} & {[}\s\s\s\2\s\s\s\s{]}\\
        {[}\s\s\s\2\s\s\s\s{]} & {[}\s\s\s\s\s\3\s\4{]} & {[}\s\s\s\2\s\s\s\4{]} & {[}\s\s\s\s\s\3\s\s{]}\\
        {[}\s\1\s\s\s\s\s\4{]} & {[}\s\1\s\s\s\3\s\s{]} & {[}\s\s\s\s\s\s\s\s{]} & \s\s\s\s\s\s\\
        {[}\s\1\s\s\s\s\s\s{]} & {[}\s\s\s\s\s\s\s\4{]} & \s\s\s\s\s\s & \s\s\s\s\s\s\\
        {[}\s\s\s\s\s\s\s\4{]} & {[}\s\s\s\s\s\s\s\s{]} & \s\s\s\s\s\s & \s\s\s\s\s\s\\
        {[}\s\s\s\s\s\s\s\s{]} & \s\s\s\s\s\s & \s\s\s\s\s\s & \s\s\s\s\s\s\\

        \hline
        
    \end{tabular}
    
    \vspace{5mm}
    
    \caption{Linear orders $L_1, L_2, L_3, L_4$ on all subsets of $[4]$ forming the local realizer of $\lat{4}$. Each column corresponds to one partial linear order. The greatest element in an order is the top one.}\label{tab:partial_realizer_4}
\end{table}

\begin{lemma}\label{lem:ldim_of_B_4}
    $\ldim(\lat{4}) \leq 3$.
\end{lemma}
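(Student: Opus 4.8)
The plan is to exhibit an explicit local realizer of $\lat{4}$ of frequency $3$, namely the one displayed in \Cref{tab:partial_realizer_4}, and to verify directly that it has the required two properties: (a) every pair of elements of $\lat{4}$ appears together in at least one of the four partial linear extensions $L_1,L_2,L_3,L_4$, and that their restriction to each $L_i$ is indeed a valid partial linear extension (i.e.\ $L_i$ is a linear order on its support that is consistent with inclusion); and (b) for every pair $A,B$ of subsets of $[4]$ with $A \not\subset B$, there is some $L_i$ containing both $A$ and $B$ in which $B <_{L_i} A$ (equivalently, whenever $A \subset B$, no $L_i$ reverses them). Once (a) and (b) are checked, the family $\mathcal{L} = \{L_1,L_2,L_3,L_4\}$ is a local realizer, and a glance at the table shows each of the $16$ subsets of $[4]$ occurs in at most three of the four columns, so the frequency is at most $3$, giving $\ldim(\lat{4}) \le 3$.

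First I would fix notation identifying each row entry such as $[\,1\,2\,\_\,4\,]$ with the subset it represents (reading the $i$-th slot as "$i$ present / absent"), with the bottom row the least element and the top row the greatest, so each column $L_i$ is read bottom-to-top as an increasing chain. I would then note that consistency of each $L_i$ with the inclusion order can be confirmed slot-wise: scanning a column from bottom to top, no coordinate ever switches from present to absent, which is exactly the condition that $A \subset B$ whenever $A$ precedes $B$ in that column; this simultaneously establishes that each $L_i$ is a partial linear extension and verifies the "only if" direction needed in (b) (no linear extension in $\mathcal{L}$ reverses a genuine inclusion).

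The substantive check is the combination of (a) and the "if" direction of (b): for every unordered pair $\{A,B\}$ one must locate an $L_i$ with $A,B \in L_i$, and moreover, when $A$ and $B$ are incomparable or when $B \subsetneq A$, that $L_i$ places them in the correct (reversed, from $A$'s viewpoint) order. Since $\lat{4}$ has $\binom{16}{2} = 120$ pairs, I would organize this verification by the type of pair — comparable pairs (of which there are $3^4 - 1 = 80$ ordered strict ones, hence $40$ comparable unordered pairs) versus the $80$ incomparable ones — and for the incomparable pairs, further group by the symmetric difference pattern, using the evident symmetry of the construction under permuting coordinates as much as the table allows to cut down cases. The main obstacle is precisely that this is a finite but sizeable case analysis with no slick shortcut: the realizer was found by computer search, and the honest proof is a verification. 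I would therefore present the table as the certificate, spell out the two conditions a local realizer must satisfy, indicate how the slot-wise monotonicity handles consistency and the non-reversal of inclusions in one stroke, and then assert (with a representative sample of cases worked out) that the remaining finite check — that each pair is covered with the correct orientation — goes through, concluding $\ldim(\lat{4}) \le 3$.
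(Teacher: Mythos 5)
Your overall approach is the same as the paper's: exhibit the four partial linear orders of \cref{tab:partial_realizer_4} as a certificate, check that they form a local realizer, and observe that every subset of $[4]$ appears in at most three of the four columns (indeed, the columns have $14+13+11+10=48=16\cdot 3$ entries in total, so each element appears exactly three times). The paper delegates the finite verification to the Python script in \cref{sec:code}; you propose to do it by hand.

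However, one concrete step of your verification plan is wrong. You claim that each column, read bottom to top, is ``slot-wise monotone'' (no coordinate ever switches from present to absent), and you use this both to certify that each $L_i$ is a partial linear extension and to dispose of the non-reversal condition for comparable pairs. That would mean each column is a chain in $\lat{4}$, which is impossible ($L_1$ has $14$ elements while the longest chain in $\lat{4}$ has $5$) and visibly false in the table: already the third-lowest entry of $L_1$ is $\{1\}$ sitting above $\{4\}$, so coordinate $4$ switches from present to absent. The columns are linear extensions of subposets, not chains, so incomparable elements are interleaved and the consistency check is genuinely a check over all pairs within a column, not over consecutive entries. Your pair counts are also off ($3^4-2^4=65$ strictly comparable unordered pairs and $120-65=55$ incomparable ones, not $40$ and $80$), and in the end you only assert that the remaining finite check ``goes through'' on a sample of cases; to make this a proof you must actually perform the full pairwise verification, which is exactly what the paper's script does.
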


\begin{proof}
    In \cref{tab:partial_realizer_4}, we give partial linear orders $L_1, L_2, L_3, L_4$ on the ground set of $\lat{4}$. 
    We claim that $\{L_1, L_2, L_3, L_4\}$ is a local realizer of $\lat{4}$. 
    The claim can be verified using the Python script provided in \cref{sec:code}.
\end{proof}

For emphasis, let us formally show that~\cref{lemma:ldim_prod} and~\Cref{lem:ldim_of_B_4} indeed imply~\Cref{th:upper_bound}.
For every positive integer $n$, where $n = 4k + r$ for some nonnegative integer $k$ and $r \in \{0,1,2,3\}$, we have $\lat{n} = (\lat{4})^k \times \lat{r}$. Hence, 
    \[\ldim(\lat{n}) \leq k \cdot \ldim(\lat{4}) + \ldim(\lat{r}) \leq k \cdot 3 + r \leq \left\lceil\frac{3}{4} n\right\rceil. \]

As mentioned in the introduction, we were able to improve the bound of $\left\lceil\frac{3}{4} n\right\rceil$ to $\left\lceil\frac{5}{7} n\right\rceil$.
For this, analogously, it suffices to prove that $\ldim(\lat{7}) \leq 5$.

\begin{lemma}\label{lem:ldim_of_B_7}
    $\ldim(\lat{7}) \leq 5$.
\end{lemma}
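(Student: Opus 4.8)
The plan is to mirror the strategy used for \Cref{lem:ldim_of_B_4}: exhibit an explicit local realizer of $\lat{7}$ of frequency at most $5$, and verify its correctness mechanically. Concretely, I would display a family $\mathcal{L} = \{L_1, \dots, L_m\}$ of partial linear extensions of $\lat{7}$ in a table analogous to \cref{tab:partial_realizer_4}, where each $L_i$ is a linear order on the subfamily of $2^{[7]}$ consisting of those sets it mentions, and such that every element of $\lat{7}$ appears in at most $5$ of the $L_i$. The two things that must then be checked are: (i) for every pair $A \subseteq B$ in $\lat{7}$, no $L_i$ reverses them (i.e.\ $B <_{L_i} A$ never happens), and (ii) for every incomparable pair $A, B$, there is some $L_i$ containing both $A$ and $B$ (automatically with the correct, non-forcing order on one side). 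Condition (i) together with the reflexive/transitive structure guarantees that the "only if" direction of the local realizer definition holds, and (ii) gives the "if" direction.

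The first step is to find such a family, which is where the real work lies and which I would carry out by computer search assisted by a SAT-solver, exactly as the authors indicate was done for $\lat{4}$ and as in~\cite{BDIM24}: encode "there is a local realizer of $\lat{7}$ of frequency $\le 5$ using at most $N$ partial linear extensions" as a Boolean satisfiability instance (variables recording, for each set and each slot, whether the set is in a given extension and at what rank; clauses enforcing that each extension is a genuine linear order respecting $\subseteq$, that frequencies are bounded by $5$, and that every incomparable pair is separated), increase $N$ until the solver returns a model, and then read off the extensions. Once a candidate family is in hand, the second step is purely verificational: run the Python script from \cref{sec:code} (the same one used for $\lat{4}$, which takes a list of partial linear orders and checks the local-realizer conditions for the given Boolean lattice) on this family for $n = 7$, confirming both (i) and (ii). The proof in the paper then reads, as for \Cref{lem:ldim_of_B_4}: "The table below lists partial linear orders $L_1, \dots, L_m$ on the ground set of $\lat{7}$; the family $\{L_1, \dots, L_m\}$ is a local realizer of $\lat{7}$ of frequency $5$, which can be verified using the Python script in \cref{sec:code}."

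The main obstacle is not the verification — that is a finite, fast computation — but producing a witness family at all. The search space for $\lat{7}$ ($128$ elements, $\binom{128}{2}$ pairs, many of them incomparable) is far larger than for $\lat{4}$, so a naive SAT encoding may be intractable; I expect to need a smarter encoding that exploits symmetry. In particular I would break the $S_7$ symmetry of $\lat{7}$ on the variables (e.g.\ fix the relative order of singletons in some extension, or quotient pairs by the symmetry group), and I would likely seed the search with structured extensions — for instance, extensions built from the "standard" realizer of $\lat{7}$ of size $7$ by merging or truncating, or extensions adapted from a good local realizer of $\lat{4}$ via the product decomposition $\lat{7} = \lat{4} \times \lat{3}$ — rather than searching from scratch. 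A secondary subtlety: unlike the $\lat{4}$ table, here the number $m$ of partial extensions needed may be substantially larger than the frequency $5$, and one must be careful that the "if" direction is genuinely witnessed for \emph{every} incomparable pair, including pairs of sets of very different sizes; the SAT encoding handles this, but it is the constraint most likely to force $m$ (and hence solver runtime) up, so some experimentation with the trade-off between $m$ and solvability is expected.
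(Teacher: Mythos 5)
Your proposal is exactly the paper's approach: the authors exhibit seven explicit partial linear orders on the subsets of $[7]$ (found with a SAT solver, \cref{tab:partial_realizer_7}) in which every element occurs at most $5$ times, and verify the local-realizer conditions with the Python script of \cref{sec:code}, just as for $\lat{4}$. The only thing your write-up leaves open is the actual witness family, which is the computational content of the lemma rather than a conceptual gap.
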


\begin{proof}
    In \cref{tab:partial_realizer_7} (at the end of the paper), we give partial linear orders $L_i$ for $i \in [7]$ on the ground set of $\lat{7}$. 
    We claim that $\{ L_i : i \in [7]\}$ is a local realizer of $\lat{7}$. 
    The claim can be verified using the Python script provided in \cref{sec:code}.
\end{proof}

Let us conlude this section briefly discussing how we found the local realizers witnessing~\Cref{lem:ldim_of_B_4,lem:ldim_of_B_7}.
Theese local realizers (\cref{tab:partial_realizer_4,tab:partial_realizer_7}) were found using a SAT solver~\cite{SoosNC09}. 
The code used to generate the SAT formulas is available in a public repository.\footnote{\url{https://github.com/Mapet13/Local_dimension_of_a_Boolean_lattice}}
For a given poset $P$, a positive integer $k$, and a positive integer $d$, we build a SAT formula so that the formula is satisfiable if and only if $\ldim(P) \leq d$. 
Moreover, given a satisfying assignment to the formula we can construct a local realizer \(\{L_1, \dots, L_k\}\) of $P$ of frequency $d$.
Introduce variables $x_{A, B, i}$ and $y_{A, B, i}$ for every pair of distinct elements $A, B $ in $P$ and $i \in [k]$.
We will set $A <_{L_i} B$ if $x_{A, B, i}$ is set to $1$ in the satisfying assignment, and $B <_{L_i} A$ if $y_{A, B, i}$ is set to $1$ in the satisfying assignment. 
Next, introduce a variable $z_{A, i}$ for every element $A$ in $P$ and $i \in [k]$.
We will use the element $A$ in $L_i$ if and only if $z_{A, i}$ is set to $1$ in the satisfying assignment. 
The SAT formula is a conjunction of the following conditions.
First, we have to make sure that each $L_i$ is a linear order, that is, for all distinct
$A,B,C$ in $P$ and $i \in [k]$ we add clauses 
\begin{align*}
    z_{A, i} \wedge z_{B, i} \wedge z_{C, i} \wedge x_{A,B,i} \wedge x_{B,C,i} &\Rightarrow x_{A, C, i}\\
    z_{A, i} \wedge z_{B, i} \wedge z_{C, i} \wedge y_{A,B,i} \wedge y_{B,C,i} &\Rightarrow y_{A, C, i}.
\end{align*}
Second, we have to make sure that for all distinct $A$ and $B$ in $P$ if $A \leq B$ then there is at least one $i \in [k]$ that $A <_{L_i} B$ and there is no $j \in [k]$ that $B <_{L_j} A$. 
That is, if $A \leq B$ in $P$, then we add clauses 
\[\bigvee_{i=1}^{k} x_{A, B, i} \ \text{ and } \ \neg y_{A, B, i} \text{ for each } i \in [k].\]
Third, we have to make sure that for all distinct $A$ and $B$ in $P$ if $A$ if incomparable to $B$ in $P$, then there is at least one $i \in [k]$ that $A <_{L_i} B$ and at least one $j \in [k]$ that $B <_{L_j} A$. 
That is, if $A$ and $B$ are indeed incomparable in $P$, then we add clauses 
\[\bigvee_{i=1}^{k} x_{A, B, i} \ \text{ and } \ \bigvee_{i=1}^{k} y_{A, B, i}.\]
Next, we need to make sure that relations between variables $x$, $y$ and $z$ are correct. 
That is, for all distinct $A$ and $B$ in $P$ and for each $i \in [k]$, we add following clauses: 
\begin{itemize}
    \item $x_{A, B, i} \Rightarrow z_{A, i}$ and $x_{A, B, i} \Rightarrow z_{B, i}$ (if $A <_{L_i} B$ then both A and B are used in $L_i$),
    \item $y_{A, B, i} \Rightarrow z_{A, i}$ and $y_{A, B, i} \Rightarrow z_{B, i}$ (if $B <_{L_i} A$ then both A and B are used in $L_i$), 
    \item $z_{A, i} \wedge z_{B, i} \Rightarrow x_{A, B, i} \vee y_{A, B, i}$ (if both A and B are used in $L_i$ then $A <_{L_i} B$ or $B <_{L_i} A$),
    \item $\neg x_{A, B, i} \vee \neg y_{A, B, i}$ (it cannot happen that simultaneously $A <_{L_i} B$ and $B <_{L_i} A$)
\end{itemize}
Finally, we need to make sure that the frequency of the local relizer is at most $d$ (i.e.\ each element $A$ in $P$ is used at most $d$ times in the local realizer). 
That is, for each $(d+1) $-tuple $(c_1,\dots,c_{d+1})$ of elements in $[k]$, we add clauses 
\begin{align*}
    \bigvee_{i=1}^{d+1} \neg z_{A, c_i}. 
\end{align*}

\section{The problem of singletons}

\begin{proof}[Proof of~\Cref{th:lat_1_n}]
    Let $d$, $r$, and $n$ be positive integers such that $r = \lceil n \slash d \rceil$.
    For each $i \in [r]$, let $I_i = \{d \cdot (i-1) + j : j \in [d]\} \cap [n]$.
    Note that $[n]$ is a disjoint union of the sets $I_i$ for $i \in [r]$. 
    Let $i \in [r]$.
    For each subset $J \subset I_i$, we define a partial linear extension $L_{i,J}$ of $\lat{n}^1$ in the following way.
    First, let $\cgA_J$ be the collection of all sets $A \subset [n]$ with $|A| > 1$ such that $I_i \setminus (A \cap I_i) = J$, and let $\calS_J = \{\{x\} : x \in J\}$.
    The ground set of $L_{i,J}$ is $\cgA_J \cup \calS_J$ and the ordering is such that each element of $\cgA_J$ is less than each element of $\calS_J$ in $L_{i,J}$.
    Since for every $\{x\} \in \calS_J$ and for every $A \in \calA_J$, we have $x \notin A$ by definition, a partial linear extension $L_{i,J}$ of $\lat{n}^1$ satisfying the above requirement exists.
    Additionally, let $L$ be any linear extension of $\lat{n}^1$ such that for all $x \in [n]$ and $A \subset [n]$ with $|A| > 1$, we have $\{x\} < A$ in $L$.
    Let $L'$ be a linear extension of $\lat{n}^1$ also satisfying the above condition and additionally, for all distinct $x,y \in [n]$ with $\{x\} < \{y\}$ in $L$, we have $\{y\} < \{x\}$ in $L'$, and for all distinct $A,B \in [n]$ with $|A|,|B|>1$ and $A < B$ in $L$, we have $B < A$ in $L'$.
    We claim that 
        \[\calL = \{L,L'\} \cup \{L_{i,J} : i \in [r], \ J \subset I_i\}\]
    is a local realizer of $\lat{n}^1$.
    It suffices to show that for every $x \in [n]$ and $A \subset [n]$ with $|A| > 1$ such that $x \notin A$ there exists $i \in [r]$ and $J \subset I_i$ such that $A < \{x\}$ in $L_{i,J}$.
    Let $i \in [r]$ be such that $x \in I_i$ and let $J = I_i \setminus (A \cap I_i)$.
    By definition, we obtain $A \in \calA_J$ and $\{x\} \in \calS_J$.
    It follows that $A < \{x\}$ in $L_{i,J}$, as desired.

    Next, we compute the frequency of $\calL$.
    First, let $x \in [n]$.
    Note that if $\{x\} \in L_{i,J}$, then surely $x \in I_i$.
    It follows that $x$ occurs in at most $2 + (2^{|I_i|}-1) \leq 2^d+1$ partial linear extensions in $\calL$.
    Next, let $A \subset [n]$ with $|A| > 1$.
    By definition, for each $i \in [r]$, there is exactly one $J \subset [n]$ with $A \in L_{i,J}$ (it is $J = I_i \setminus (A \cap I_i)$).
    It follows that $A$ occurs in $2 + r$ partial linear extensions in $\calL$.
    Summarizing, the frequency of $\calL$ is at most $\max \{2^d+1,r+2\}$.

    Given a positive integer $n$, we set $d = \lceil \log n - \log \log n\rceil$ and $r = \lceil n \slash d \rceil$
    It follows that
    \begin{align*}
        2^d+ 1 &\leq 2^{\log n - \log \log n + 1}+1 = 2\frac{n}{\log n}+1 \text{ and}\\
        r+2 &\leq \frac{n}{\lceil \log n - \log \log n\rceil} + 3 \leq 2\frac{n}{\log n}+3.
    \end{align*}
    Altogether, we obtain $\ldim(\lat{n}^1) \leq 2\frac{n}{\log n} + 3$, as desired.
\end{proof}

\section{Small realizers for the problem of singletons}

In this section, we prove~\Cref{th:lat_1_n_short_realizer}.
We will use the well-known Turán's theorem.

\begin{theorem}[\cite{Turan}]\label{Turan}
    Let $k$ and $n$ be positive integers.
    If a graph $G$ has more than $(1 - \frac{1}{k}) \cdot \frac{n^2}{2}$ edges, then it contains a complete graph on $k+1$ vertices as a subgraph.
\end{theorem}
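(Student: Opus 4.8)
The plan is to prove the contrapositive in its sharp extremal form: if a graph $G$ on $n$ vertices contains no copy of $K_{k+1}$, then $e(G) \leq \left(1 - \frac{1}{k}\right)\frac{n^2}{2}$. This immediately yields the stated theorem, since a graph with strictly more edges than this bound cannot be $K_{k+1}$-free. I would establish the extremal inequality by a double induction: an outer induction on $k$, and, for each fixed $k$, an inner induction on $n$.

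For the outer base case $k = 1$, a $K_2$-free graph has no edges, so $e(G) = 0 = \left(1-\frac{1}{1}\right)\frac{n^2}{2}$. Assuming the statement for $k-1$, I turn to the inner induction on $n$. The inner base cases are those with $n \leq k$, which are immediate, because $e(G) \leq \binom{n}{2} = \frac{n^2}{2}\left(1 - \frac{1}{n}\right) \leq \frac{n^2}{2}\left(1 - \frac{1}{k}\right)$ using $n \leq k$. For the inner inductive step with $n > k$, I split into two cases according to whether $G$ contains a copy of $K_k$. If $G$ is $K_k$-free, then the outer induction hypothesis (the case $k-1$) gives $e(G) \leq \left(1 - \frac{1}{k-1}\right)\frac{n^2}{2} \leq \left(1 - \frac{1}{k}\right)\frac{n^2}{2}$, and we are done.

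In the remaining case $G$ contains a clique on a vertex set $A$ with $|A| = k$. Put $B = V(G) \setminus A$, so $|B| = n - k$, and I would count the edges of $G$ in three groups. The edges inside $A$ number exactly $\binom{k}{2}$. The induced subgraph $G[B]$ is $K_{k+1}$-free on $n-k < n$ vertices, so the inner induction hypothesis bounds its edges by $\left(1-\frac{1}{k}\right)\frac{(n-k)^2}{2}$. Finally, the crucial use of $K_{k+1}$-freeness is in the edges between $A$ and $B$: each vertex of $B$ has at most $k-1$ neighbours in $A$, for otherwise it would be adjacent to all $k$ vertices of $A$ and together with $A$ form a $K_{k+1}$; hence there are at most $(k-1)(n-k)$ such edges. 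Summing the three bounds and verifying the identity
\[
\binom{k}{2} + (k-1)(n-k) + \left(1-\frac{1}{k}\right)\frac{(n-k)^2}{2} = \left(1-\frac{1}{k}\right)\frac{n^2}{2}
\]
completes the step; this identity follows by factoring out $(k-1)$ and recognizing the remaining factor as $\frac{k^2 + 2k(n-k) + (n-k)^2}{2k} = \frac{n^2}{2k}$.

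The proof is routine once the scaffolding is in place, so I do not expect a deep obstacle; rather, the part requiring care is the bookkeeping of the two coordinated inductions — making sure the $K_k$-free branch is discharged by the smaller-$k$ case while the clique-deletion branch is discharged by the smaller-$n$ case — and confirming that the between-edges estimate is exactly where $K_{k+1}$-freeness enters. An alternative I would keep in reserve is Zykov's symmetrization argument, which shows that an edge-maximal $K_{k+1}$-free graph must be complete multipartite (by repeatedly replacing a lower-degree vertex with a copy of a non-adjacent higher-degree one), after which the bound follows from minimizing $\sum_i n_i^2$ over part sizes via the power-mean inequality.
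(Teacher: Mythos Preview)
Your proof is correct and is one of the standard proofs of Tur\'an's theorem (close to Erd\H{o}s's induction argument); the double induction is set up properly, the cross-edge bound is exactly where $K_{k+1}$-freeness is used, and the algebraic identity checks out.

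There is nothing to compare against, however: the paper does not prove this theorem at all. It is stated as a classical result with a citation to Tur\'an's original paper and then invoked as a black box in the proof of \Cref{th:lat_1_n_short_realizer}. So your writeup goes well beyond what the paper does here; if you were writing this paper you would simply cite the result rather than reprove it.
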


\begin{proof}[Proof of~\Cref{th:lat_1_n_short_realizer}]
    Let $\calL$ be a local realizer of $\lat{n}^1$.
    For every $L \in \calL$, let $L'$ be $L$ restricted only to singletons, i.e.\ elements of $\lat{n}^1$ of cardinality $1$.
    We define a graph $G_\calL$ in the following way.
    The vertex set is $[n]$ and two distinct $i,j \in [n]$ are connected by an edge if there exists $L \in \calL$ with $i,j \in L$ such that $i$ and $j$ are two greatest elements in $L'$ (the order among $i$ and $j$ does not matter). 

    Let $I$ be an independent set in $G_\calL$.
    We claim that
    \begin{equation}\label{eq:ind-freq}
        \text{the frequency of $\calL$ is at least $|I|$.}
    \end{equation}
    Let $A = [n] \setminus I$.
    More precisely, we claim that the frequency of $A$ in $\calL$ is at least $|I|$.
    For every $x \in I$, choose $L_x \in \calL$ with $A <_{L_x} \{x\}$.
    To justify the claim, it suffices to show that for all distinct $x$ and $y$ in $I$, we have $L_x \neq L_y$.
    Suppose to the contrary that there are distinct $x$ and $y$ in $I$ with $L_x = L_y$, call this partial linear extension $L$.
    Let $I'$ be the set of $z \in I$ with $z \in L$ and $A <_{L} \{z\}$.
    In particular, $x,y \in I'$ and $|I'| > 2$.
    Since $A = [n] \setminus I$, the two greatest elements in $L'$ are members of $I'$, say $x'$ and $y'$.
    However, this contradicts $I$ being an independent set in $G_\calL$.
    This way, we obtain a contradiction proving~\ref{eq:ind-freq}.

    Suppose that $\calL$ has size $cn$.
    It follows that $G_\calL$ has at most $cn$ edges.
    Thus, the complement of $G_\calL$ has more than
    \[\binom{n}{2} - cn - 1 = \frac{n(n-1-2c) - 2}{2} = \left(1 - \frac{(2c+1)n+2}{n^2} \right) \cdot \frac{n^2}{2}\]
    edges.
    By~\Cref{Turan}, the complement of $G_\calL$ contains a complete graph on $\ell := \frac{n^2}{(2c+1)n+2}+1$ vertices.
    Therefore, $G_\calL$ contains an independent set on $\ell$ vertices and by~\ref{eq:ind-freq}, the frequency of $\calL$ is at least $\ell$.
    To complete the proof, we note that 
        \[\ell = \frac{n^2}{(2c+1)n+2}+1 \geq \frac{n}{(2c+1)+\frac{2}{n}}+1 \geq \frac{n}{2c+2} + 1 > \frac{n}{2(c+1)}.\qedhere\]
\end{proof}

% =======================================================
\section{Posets of multisets}\label{sec:multisets}
% =======================================================

In this section, we prove \cref{th:mlat_exact_ldim}, that is, for every positive integer $n$, $\ldim(\mlat{n}) = \dim(\mlat{n}) = n$.
As mentioned in the introduction, we have $\ldim(\mlat{n}) \leq \dim(\mlat{n}) = n$.
Therefore, in order to prove \cref{th:mlat_exact_ldim}, it suffices to show that $n \leq \ldim(\mlat{n})$ for every positive integer $n$.
To this end, we analyze subposets of $\mlat{n}$, where multiplicities of elements in multisets are bounded. 

Let $m$ and $n$ be positive integers.
We define \defin{$\mn{n}{m}$} to be the subposet of $\mlat{n}$ induced by all multisets such that every element of a multiset has multiplicity less than $m$. The number of elements in $\mn{n}{m}$ is equal to $m^n$.
Moreover, $\mn{n}{2}$ is isomorphic to~$\lat{n}$.
Next, we define the singleton variant.
Let \defin{$\cgS_{n,m}$} to be the set of all multisets in $\mn{n}{m}$ consisting of exactly one element with a positive multiplicity, and let \defin{$\mn{n}{m}^+$} be the set of all multisets in $\mn{n}{m}$ consisting of at least two elements with a positive multiplicity.
We define \defin{$\mn{n}{m}^1$} to be the poset on the ground set $\cgS_{n,m} \cup \mn{n}{m}^+$ with the order relation such that $A < B$ if and only if $A \in \cgS_{n,m}$, $B \in \mn{n}{m}^+$, and $A \subset B$.
Since clearly $\mn{n}{m}^1$ is a subposet of $\mn{n}{m}$, it suffices to show a lower bound for the former poset.

\begin{lemma}\label{lemma:mn_lower_bound}
   For all positive integers $n$ and $m$, we have $\ldim(\mn{n}{m}^1) \geq \frac{n \log m - \log n}{\log(3n^2m)}$.
\end{lemma}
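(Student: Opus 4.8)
The plan is to adapt to the multiset setting the lower-bound argument of Dam\'asdi et al.\ \cite[Section~3.1]{Felsner21} (which handles $\lat{n}^1$, the case $m=2$), via a fingerprinting (injective encoding) argument. Write $t=\ldim(\mn{n}{m}^1)$ and fix a local realizer $\calL$ of $\mn{n}{m}^1$ of frequency~$t$; for a multiset $B$ write $\mu_i(B)$ for the multiplicity of $i$ in $B$. It suffices to prove $(3n^2m)^t\ge m^n/n$, since taking logarithms gives the asserted inequality; the finitely many pairs $(n,m)$ for which $m^n/n$ is too small for this to be informative — equivalently, for which the right-hand side of the lemma is at most~$1$ — are covered because a nonempty poset with an incomparable pair has local dimension at least~$2$. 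Two structural features drive the argument. First, the only relations in $\mn{n}{m}^1$ are $\{i^k\}<B$ precisely when $\mu_i(B)\ge k$, so the poset has height~$2$ with $\cgS_{n,m}$ below and $\mn{n}{m}^+$ above. Second, in any partial linear extension $L$ and any $B\in L\cap\mn{n}{m}^+$, the singletons of $L$ lying above $B$ form a final segment of the chain that $L$ induces on $\cgS_{n,m}\cap L$, and every such singleton $\{i^k\}$ satisfies $\mu_i(B)<k$. Moreover, the local-realizer condition applied to the incomparable pair $(B,\{i^{\mu_i(B)+1}\})$ guarantees, for every non-full coordinate $i$ of $B$ (i.e.\ $\mu_i(B)\le m-2$), an extension $L\in\calL$ containing $B$ in which $\{i^{\mu_i(B)+1}\}$ lies above~$B$; this is the extension that will certify the value $\mu_i(B)$.

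Next I would turn this into a bounded fingerprint. Since $B$ lies in at most $t$ members of $\calL$, it is enough to attach to $B$, for each such extension $L$, a symbol drawn from a set of size at most $3n^2m$, so that the resulting (at most $t$) symbols determine~$B$. The symbol for $L$ would be assembled from the least singleton $g_L(B)$ lying above $B$ in~$L$ (one of at most $n(m-1)$ singletons, with an extra marker when no singleton lies above~$B$), together with a small amount of auxiliary data — a coordinate in $[n]$ and a ternary flag — arranged so that, reading across all the extensions containing~$B$, one recovers $\mu_i(B)$ for every $i\in[n]$: a full coordinate is recognized because its singletons never lie above~$B$, while for a non-full coordinate $i$ the certifying extension of the previous paragraph reveals $\mu_i(B)$, using the final-segment structure to pass from ``the least singleton above $B$'' to ``the least singleton of coordinate $i$ above~$B$'', which equals $\{i^{\mu_i(B)+1}\}$ there. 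The auxiliary coordinate and flag are exactly the slack needed to perform this transfer, and the count $n(m-1)\cdot n\cdot 3\le 3n^2m$ bounds the number of symbols.

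Granting the encoding, injectivity of $B\mapsto(\text{its $\le t$ symbols})$ (padding with a dummy symbol when $B$ lies in fewer than $t$ extensions) yields $|\mn{n}{m}^+|\le(3n^2m)^t$, and since $|\mn{n}{m}^+|=m^n-n(m-1)-1$ exceeds $m^n/n$ except for a handful of small pairs $(n,m)$ (for which the lemma is trivial as noted above), taking logarithms gives $t\log(3n^2m)\ge n\log m-\log n$, as required. The main obstacle is exactly the design of the per-extension symbol: the naive choice — record every multiplicity that the extension ``sees'' above $B$ — has a payload that is far too large, so one must show that $O(\log(n^2m))$ bits per extension already suffice for reconstruction. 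This is where the final-segment structure of the singletons above~$B$ must be used essentially, and where the argument of \cite{Felsner21} has to be upgraded from reasoning about $0/1$ membership vectors to reasoning about integer multiplicity vectors; verifying that the transfer step can always be encoded with only a coordinate and a trit is the delicate point.
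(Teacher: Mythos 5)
Your overall strategy is the paper's: fingerprint each $B\in\mn{n}{m}^+$ by its position relative to the singletons in the at most $d:=\ldim(\mn{n}{m}^1)$ partial linear extensions containing it, use the incomparable pairs $(B,\{i^{\mu_i(B)+1}\})$ to make the fingerprint injective, and count. The structural observations you list (height two; the singletons above $B$ in $L$ form a final segment of $L$ restricted to $\cgS_{n,m}$; $\{i^k\}>_L B$ forces $\mu_i(B)<k$) are exactly the ones the paper exploits. The gap is in the one step you yourself flag as delicate: the per-extension symbol. Recording only the least singleton $g_L(B)$ above $B$ in $L$ does not let a decoder recover the final segment of singletons above $B$, because that segment depends on which extension $L$ is, and the symbol does not identify $L$. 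So for a coordinate $i$ whose certifying extension $L$ has some \emph{other} singleton as $g_L(B)$, the fact that $\{i^{\mu_i(B)+1}\}$ lies above $B$ in $L$ is simply lost. The auxiliary ``coordinate in $[n]$ plus a trit'' cannot repair this: to perform the transfer you describe, the symbol would have to carry the multiplicity $\mu_i(B)+1$ of the least coordinate-$i$ singleton above $B$, which costs $\log(m-1)$ further bits, not a trit. As written, the map from $B$ to its symbol multiset is not injective.

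The fix --- and this is what the paper's proof in effect does --- is to spend the extra factor of $n$ on identifying the extension rather than on a coordinate. Since every singleton lies in at most $d\le n$ members of $\calL$, the pair $\bigl(g_L(B),\,j\bigr)$, where $j$ is the index of $L$ among the extensions containing the singleton $g_L(B)$, determines $L$ and hence the entire final segment of singletons above $B$ in $L$; together with one dummy symbol for extensions with no singleton above $B$, this uses at most $dn(m-1)+1\le 3n^2m$ symbols, and from the recovered segments one reads off $\mu_i(B)=\min\{k-1:\{i^k\}>_L B \text{ for some } L\}$. Equivalently, the paper concatenates all extensions meeting $\cgS_{n,m}$, deletes the at most $d\,|\cgS_{n,m}|$ singleton occurrences, indexes the at most $2dn(m-1)$ resulting gaps globally, and takes as signature the set of at most $d$ gaps containing $B$, giving at most $d\binom{2dn(m-1)}{d}\le n(2n^2m)^d$ signatures. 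With this one change your argument goes through; note that the inequality $d\le n$ is essential here and should be stated.
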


\begin{proof}
     Let $n, m$ be positive integers and assume that $\ldim(\mn{n}{m}^1) = d$ for some positive integer~$d$, and let $\calL$ be a local realizer of $\mn{n}{m}^1$ of frequency $d$.

    %We define $\cgS$ to be the set of all multisets in $\mn{n}{m}$ consisting of exactly one element with a positive multiplicity.
    Let $\calL'$ be the set of all partial linear extensions in $\calL$ that contain at least one element of $\calS_{n,m}$. 
    Let $\calL' = \{L_1,\dots,L_t\}$.
    Consider a linear order $M$ on the ground set $\{(A,L_i) : i \in [t], \ A \in L_i\} \cup [t]$.
    The ordering is so that first, $(A,L_i) < (B,L_j)$ if and only if either $i < j$ or $i= j$ and $A < B$ in $L_i$, second, $(A,L_i) < j$ if and only if $i \leq j$, and third, $i < (B,L_j)$ if and only if $i < j$.
    %Note that $|\cgS| = n(m-1)$.
    Since the frequency of $\calL$ is $d$, there are at most $d|\calS_{n,m}|$ elements of $\calM_\calS := \{(S,L_i) : S \in \calS_{n,m}, \ i \in [t], \ \{x\} \in L_i\}$.
    Additionally, we obtain that $|\calL'| = t \leq d|\calS_{n,m}|$.

    Let $M_1,\dots,M_k$ be linear orders obtained from removing $\calM_\calS \cup [t]$ from $M$ and taking consecutive intervals of elements in order, i.e.\ for $i,j \in [k]$, if $\alpha \in M_i$ and $\beta \in M_j$, then $\alpha < \beta$ in~$M$.
    Note that $k \leq |\calM_\calS| + t \leq 2d|\calS_{n,m}|$.

    For every $A \in \mn{n}{m}^+$, we define the \defin{signature} of $A$ as $s(A) \in \{0,1\}^k$, where for each $i \in [k]$, we have $s(A)_i = 1$ if and only if $A \in M_i$.
    We claim that for all $A,B \in \mn{n}{m}^+$, if $s(A) = s(B)$, then $A = B$.
    Suppose to the contrary that for two distinct $A$ and $B$ in $\mn{n}{m}^+$, we have $s(A) = s(B)$.
    Since $A$ and $B$ are distinct, there is $x \in [n]$ that has different multiplicities in $A$ and $B$.
    Assume that $x$ has multiplicity $p$ in $A$ and $q$ in $B$, and also $p < q$.
    Let $X$ be the multiset containing only $x$ with multiplicity $q$.
    In particular, $X \in \calS_{n,m}$, $X \subset B$, and $X \not\subset A$. 
    Since $\calL$ is a local realizer of $\mn{n}{m}^1$, there exists $L \in \calL$ with $A <_L X$.
    Note that $X \in L$ implies $L \in \calL'$, and hence, there exists $i \in [t]$ with $L = L_i$.
    Let $j \in [k]$ be the unique index such that $(A,L_i) \in M_j$.
    Since $s(A) = s(B)$, we also have $(B,L_i ) \in M_j$.
    However, this implies that $B <_{L_i} X$, which is a contradiction with $X \subset B$, and $L_i$ being a partial linear extension of $\mn{n}{m}^1$.

    The signature of a given element $A \in \mn{n}{m}^+$ has at least $1$ and at most $d$ nonzero entries.
    It follows that the number of distinct signatures is at most
        \[\sum^d_{j=1} \binom{2dn(m-1)}{j} \leq d\binom{2dn(m-1)}{d} \leq n(2n^2m)^d.\]
    In the above inequalities, we used $d \leq n$, which is true as $d = \ldim(\lat{n}) \leq n$.
    Since $|\mn{n}{m}^+| = m^n - n(m-1)$, and the signature function is injective, we obtain
    \[m^n - n(m-1) \leq n(2n^2m)^d.\]
    Simplifying, $m^n \leq n(2n^2m)^d + n(m-1) \leq n(3n^2m)^d$, and finally, $d \geq \frac{n \log m - \log n}{\log(3n^2m)}$, as desired.
\end{proof}

For every fixed positive integer $n$, the limit $\lim_{m \to \infty}\frac{n \log m - \log n}{\log(3n^2m)}$ is equal to $n$. It follows that $\ldim(\mn{n}{m}) = n$ for a large enough $m$, and so, $\ldim(\mlat{n}) = n$, which concludes the proof of \cref{th:mlat_exact_ldim}.

\bibliographystyle{abbrv}
\bibliography{localdimension}

\newpage

\appendix

% =======================================================
\section{The code to verify \texorpdfstring{\cref{lem:ldim_of_B_4,lem:ldim_of_B_7}}{Lemmas}} \label{sec:code}
% =======================================================
We provide a short Python code that verifies whether the four linear orders in \cref{tab:partial_realizer_4} form a local realizer of $\lat{4}$, and the seven linear orders in \cref{tab:partial_realizer_7} form a local realizer of $\lat{7}$.
Beneath the code, we give the formatted linear orders so that they can be directly inputted into the script for verification.

\vspace{2mm}

\scriptsize

\begin{tcolorbox}[
    colback=white, 
    colframe=gray,
    sharpish corners
]
\begin{lstlisting}[language=Python]
N = 4 # or 7 if verifying local realizer of B_7 

NOT_FOUND_IDX = -1

def is_subset(A, B):
    return (A | B) == B

def index_in_ple(element, ple):
    for i in range(len(ple)):
        if element == ple[i]:
            return i
    return NOT_FOUND_IDX

def throw_bad_realizer():
    raise Exception(f"Provided set of orders is not a local realizer of B_{N}.")

def verify(orders):
    for A in range(1<<N):
        for B in range(1<<N):
            if A == B:
                continue

            found_a_less_b = False
            found_b_less_a = False

            for ple in orders:
                a_idx = index_in_ple(A, ple)
                b_idx = index_in_ple(B, ple)

                if a_idx == NOT_FOUND_IDX or b_idx == NOT_FOUND_IDX: 
                    continue
                elif a_idx == b_idx:
                    throw_bad_realizer()
                elif a_idx < b_idx:
                    found_a_less_b = True
                elif a_idx > b_idx:
                    found_b_less_a = True

            is_a_less_b = is_subset(A, B)
            is_b_less_a = is_subset(B, A)
            is_a_not_comparable_with_b = not is_a_less_b and not is_b_less_a

            if ((is_a_less_b and (not found_a_less_b or found_b_less_a)) or
                (is_a_not_comparable_with_b and not (found_a_less_b and found_b_less_a))):
                throw_bad_realizer()

    print(f"Provided set of orders is a correct local realizer of B_{N}.")

orders = [list(map(int, input().split())) for i in range(N)]
verify(orders)\end{lstlisting}

\tcblower

Attached file: \textattachfile[color=blue]{verify.py}{verify.py}

\end{tcolorbox}

\vspace{2mm}

\normalsize 

The first input consists of $4$ lines. Each represents one of the linear orders in \cref{tab:partial_realizer_4}. Similarly, the second represents \cref{tab:partial_realizer_7}. The subsets of $[4]$ and $[7]$ are converted to corresponding decimal integers based on their binary representations. For example, the number $13$ corresponds to the set $\{ 1, 3, 4 \}$, since the binary representation of $13$ is $1101_2$.

\scriptsize

\vspace{4mm}

\begin{tcolorbox}[
    colback=white, 
    colframe=gray,
    sharpish corners
]
\begin{lstlisting}[language=Python]
0 8 1 9 2 3 11 4 6 7 12 14 13 15

0 8 5 12 13 2 10 6 14 3 7 11 15

0 10 4 12 14 1 9 11 5 13 15

4 2 6 1 3 5 7 8 9 10
\end{lstlisting}

\tcblower

Attached file: \textattachfile[color=blue]{orders4.in}{orders4.in}

\end{tcolorbox}

\begin{tcolorbox}[
    colback=white, 
    colframe=gray,
    sharpish corners
]
\begin{lstlisting}[language=Python]
 32 1 33 8 40 4 5 36 37 9 12 13 2 10 34 42 6 41 38 7 14 11 15 39 44 45 43 46 47 16 48 24 56 17 18 50 49 20 22 28 21 23 52 60 26 54 30 53 55 25 57 58 62 27 29 59 61 31 63 64 80 68 66 82 84 70 86 65 81 72 73 76 69 77 96 88 89 92 85 91 93 100 112 97 116 95 113 117 104 124 105 121 125 98 106 114 122 107 123 102 103 111 118 126 119 127
 
 0 96 34 98 16 48 50 112 114 4 68 84 36 100 52 38 54 102 8 72 40 12 104 76 44 108 24 60 46 116 118 92 58 62 74 106 78 120 124 90 110 94 122 126 1 65 5 69 67 9 13 73 77 33 97 37 41 101 105 45 11 75 99 107 71 109 79 103 111 17 19 49 51 81 113 83 25 27 89 57 59 91 21 85 121 123 29 93 53 117 61 125 23 55 95
 
 88 2 10 32 104 56 120 42 1 9 25 33 41 57 26 58 3 35 19 73 51 11 27 43 59 74 106 90 122 83 89 75 113 115 105 91 121 107 123 4 68 6 70 20 84 22 86 12 28 5 69 13 76 77 36 100 14 30 92 21 7 85 78 15 79 87 29 31 94 95 37 44 45 38 46 39 47 52 60 54 61 62 63 108 110 101 109 111 116 119 124 127
 
 64 16 8 72 24 4 12 28 1 88 65 17 21 81 84 85 25 89 92 29 93 32 96 48 112 33 36 37 52 116 40 49 113 56 44 53 101 117 41 45 108 109 57 60 61 120 124 121 125 2 34 66 98 18 82 50 114 6 38 70 22 54 86 102 10 42 14 46 110 118 62 122 126 3 35 7 39 15 43 47 19 51 59 55 63 67 99 83 115 123 71 87
 
 16 20 80 2 66 6 70 18 82 22 86 8 14 74 26 30 1 17 90 81 78 9 5 13 94 3 19 11 27 7 15 23 31 67 75 83 91 71 79 87 95 32 40 48 33 34 49 50 35 96 98 97 99 36 38 51 112 100 114 115 102 53 118 39 103 55 119 56 41 104 105 42 57 43 106 46 107 47 110 111 60 58 59 62 63 120 122 123
 
 0 18 3 35 17 19 49 51 64 66 65 67 96 98 80 82 81 83 97 99 4 5 7 68 69 20 21 71 23 37 85 87 113 39 53 55 101 103 115 116 118 117 119 8 12 72 76 24 28 9 73 13 88 77 10 29 89 93 40 44 121 45 14 61 108 124 42 26 109 125 30 11 15 27 31 58 47 63 74 90 78 94 126 75 91 79
 
 0 72 65 73 68 76 69 77 32 97 100 101 104 108 105 2 34 66 10 109 74 106 3 35 67 99 6 70 71 102 78 103 43 75 79 110 107 111 16 18 80 48 50 82 112 114 20 84 22 86 52 24 54 88 117 56 115 23 87 119 25 120 28 92 125 26 90 30 94 126 31 95 127
\end{lstlisting}

\tcblower

Attached file: \textattachfile[color=blue]{orders7.in}{orders7.in}

\end{tcolorbox}

\newpage

\begin{table}[ht]
    \scriptsize
    \sffamily
    \centering
    \hspace*{-1.4cm}
    \begin{tabular}{|c|c|c|c|c|c|c|}
        \hline
        \rule[-1.5ex]{0pt}{2pt} $L_1$ \rule{0pt}{3ex} & $L_2$ & $L_3$ & $L_4$ & $L_5$ & $L_6$ & $L_7$ \\
{[}\s\1\s\2\s\3\s\4\s\5\s\6\s\7{]} & {[}\s\1\s\2\s\3\s\4\s\5\s\s\s\7{]} & {[}\s\1\s\2\s\3\s\4\s\5\s\6\s\7{]} & {[}\s\1\s\2\s\3\s\s\s\5\s\s\s\7{]} & {[}\s\1\s\2\s\s\s\4\s\5\s\6\s\7{]} & {[}\s\1\s\2\s\3\s\4\s\s\s\s\s\7{]} & {[}\s\1\s\2\s\3\s\4\s\5\s\6\s\7{]}\\
{[}\s\1\s\2\s\3\s\s\s\5\s\6\s\7{]} & {[}\s\1\s\2\s\3\s\s\s\5\s\6\s\s{]} & {[}\s\s\s\s\s\3\s\4\s\5\s\6\s\7{]} & {[}\s\1\s\2\s\3\s\s\s\s\s\s\s\7{]} & {[}\s\s\s\2\s\s\s\4\s\5\s\6\s\7{]} & {[}\s\1\s\2\s\s\s\4\s\5\s\s\s\7{]} & {[}\s\1\s\2\s\3\s\4\s\5\s\s\s\7{]}\\
{[}\s\s\s\2\s\3\s\4\s\5\s\6\s\7{]} & {[}\s\1\s\2\s\3\s\s\s\5\s\s\s\s{]} & {[}\s\1\s\2\s\3\s\s\s\5\s\6\s\7{]} & {[}\s\1\s\2\s\s\s\4\s\5\s\6\s\7{]} & {[}\s\s\s\s\s\s\s\4\s\5\s\6\s\7{]} & {[}\s\1\s\2\s\s\s\4\s\s\s\s\s\7{]} & {[}\s\1\s\2\s\3\s\4\s\5\s\s\s\s{]}\\
{[}\s\s\s\2\s\3\s\s\s\5\s\6\s\7{]} & {[}\s\1\s\s\s\3\s\4\s\5\s\6\s\7{]} & {[}\s\s\s\s\s\3\s\s\s\5\s\6\s\7{]} & {[}\s\1\s\2\s\s\s\s\s\5\s\6\s\7{]} & {[}\s\1\s\2\s\3\s\4\s\5\s\6\s\s{]} & {[}\s\s\s\2\s\3\s\4\s\5\s\6\s\7{]} & {[}\s\s\s\2\s\3\s\4\s\5\s\6\s\7{]}\\
{[}\s\1\s\2\s\3\s\4\s\s\s\6\s\7{]} & {[}\s\1\s\s\s\3\s\4\s\5\s\6\s\s{]} & {[}\s\1\s\2\s\3\s\4\s\s\s\6\s\7{]} & {[}\s\1\s\2\s\s\s\s\s\5\s\s\s\7{]} & {[}\s\s\s\2\s\3\s\4\s\5\s\6\s\s{]} & {[}\s\s\s\2\s\3\s\4\s\5\s\s\s\7{]} & {[}\s\s\s\2\s\3\s\4\s\5\s\s\s\7{]}\\
{[}\s\1\s\2\s\3\s\s\s\s\s\6\s\7{]} & {[}\s\1\s\s\s\3\s\s\s\5\s\6\s\7{]} & {[}\s\1\s\s\s\3\s\4\s\s\s\6\s\7{]} & {[}\s\1\s\2\s\s\s\s\s\s\s\6\s\7{]} & {[}\s\1\s\2\s\s\s\4\s\5\s\6\s\s{]} & {[}\s\s\s\2\s\3\s\4\s\s\s\s\s\7{]} & {[}\s\s\s\2\s\3\s\4\s\5\s\s\s\s{]}\\
{[}\s\s\s\2\s\3\s\s\s\s\s\6\s\7{]} & {[}\s\1\s\s\s\3\s\s\s\5\s\6\s\s{]} & {[}\s\1\s\s\s\3\s\s\s\s\s\6\s\7{]} & {[}\s\1\s\2\s\s\s\s\s\s\s\s\s\7{]} & {[}\s\s\s\2\s\s\s\4\s\5\s\6\s\s{]} & {[}\s\s\s\2\s\s\s\4\s\5\s\s\s\7{]} & {[}\s\s\s\2\s\s\s\4\s\5\s\s\s\7{]}\\
{[}\s\1\s\2\s\s\s\4\s\5\s\6\s\7{]} & {[}\s\1\s\s\s\3\s\4\s\5\s\s\s\7{]} & {[}\s\s\s\2\s\3\s\4\s\s\s\6\s\7{]} & {[}\s\1\s\2\s\3\s\4\s\5\s\6\s\s{]} & {[}\s\s\s\s\s\3\s\4\s\5\s\6\s\s{]} & {[}\s\s\s\2\s\s\s\4\s\s\s\s\s\7{]} & {[}\s\s\s\2\s\s\s\4\s\5\s\s\s\s{]}\\
{[}\s\1\s\2\s\s\s\4\s\s\s\6\s\7{]} & {[}\s\1\s\s\s\3\s\4\s\5\s\s\s\s{]} & {[}\s\s\s\s\s\3\s\4\s\s\s\6\s\7{]} & {[}\s\1\s\2\s\3\s\s\s\5\s\6\s\s{]} & {[}\s\1\s\2\s\3\s\4\s\s\s\6\s\7{]} & {[}\s\1\s\2\s\3\s\4\s\5\s\6\s\s{]} & {[}\s\1\s\s\s\3\s\4\s\5\s\6\s\7{]}\\
{[}\s\s\s\2\s\s\s\4\s\5\s\6\s\7{]} & {[}\s\1\s\2\s\s\s\4\s\5\s\6\s\7{]} & {[}\s\1\s\2\s\3\s\4\s\5\s\6\s\s{]} & {[}\s\1\s\2\s\s\s\4\s\5\s\6\s\s{]} & {[}\s\s\s\2\s\3\s\4\s\s\s\6\s\7{]} & {[}\s\1\s\2\s\3\s\4\s\s\s\6\s\s{]} & {[}\s\s\s\s\s\3\s\4\s\5\s\s\s\7{]}\\
{[}\s\s\s\2\s\s\s\s\s\5\s\6\s\7{]} & {[}\s\1\s\s\s\s\s\4\s\5\s\6\s\7{]} & {[}\s\s\s\2\s\3\s\4\s\5\s\6\s\s{]} & {[}\s\1\s\2\s\s\s\s\s\5\s\6\s\s{]} & {[}\s\1\s\2\s\3\s\4\s\s\s\6\s\s{]} & {[}\s\s\s\2\s\s\s\4\s\5\s\6\s\s{]} & {[}\s\s\s\s\s\3\s\4\s\5\s\s\s\s{]}\\
{[}\s\s\s\2\s\s\s\4\s\s\s\6\s\7{]} & {[}\s\1\s\s\s\3\s\s\s\5\s\s\s\7{]} & {[}\s\1\s\s\s\3\s\4\s\5\s\6\s\s{]} & {[}\s\1\s\2\s\s\s\s\s\5\s\s\s\s{]} & {[}\s\1\s\2\s\s\s\4\s\s\s\6\s\7{]} & {[}\s\1\s\2\s\3\s\4\s\5\s\s\s\s{]} & {[}\s\s\s\s\s\s\s\4\s\5\s\6\s\7{]}\\
{[}\s\s\s\2\s\s\s\s\s\s\s\6\s\7{]} & {[}\s\1\s\s\s\3\s\s\s\5\s\s\s\s{]} & {[}\s\s\s\2\s\3\s\s\s\5\s\6\s\s{]} & {[}\s\1\s\2\s\3\s\4\s\s\s\6\s\s{]} & {[}\s\s\s\2\s\3\s\4\s\s\s\6\s\s{]} & {[}\s\1\s\2\s\s\s\4\s\5\s\s\s\s{]} & {[}\s\1\s\s\s\s\s\4\s\5\s\s\s\s{]}\\
{[}\s\1\s\s\s\3\s\4\s\5\s\6\s\7{]} & {[}\s\1\s\2\s\s\s\4\s\5\s\s\s\7{]} & {[}\s\s\s\s\s\3\s\4\s\5\s\6\s\s{]} & {[}\s\1\s\2\s\s\s\4\s\s\s\6\s\s{]} & {[}\s\s\s\2\s\s\s\4\s\s\s\6\s\7{]} & {[}\s\1\s\2\s\3\s\4\s\s\s\s\s\s{]} & {[}\s\1\s\2\s\3\s\s\s\5\s\6\s\7{]}\\
{[}\s\1\s\s\s\s\s\4\s\5\s\6\s\7{]} & {[}\s\1\s\2\s\s\s\4\s\5\s\6\s\s{]} & {[}\s\s\s\s\s\3\s\s\s\5\s\6\s\s{]} & {[}\s\1\s\2\s\3\s\4\s\s\s\s\s\s{]} & {[}\s\1\s\2\s\s\s\4\s\s\s\6\s\s{]} & {[}\s\1\s\2\s\s\s\4\s\s\s\s\s\s{]} & {[}\s\1\s\2\s\3\s\s\s\5\s\s\s\7{]}\\
{[}\s\1\s\s\s\s\s\4\s\s\s\6\s\7{]} & {[}\s\1\s\s\s\s\s\4\s\5\s\6\s\s{]} & {[}\s\1\s\2\s\3\s\4\s\s\s\6\s\s{]} & {[}\s\1\s\2\s\3\s\s\s\s\s\6\s\s{]} & {[}\s\1\s\s\s\s\s\4\s\5\s\6\s\s{]} & {[}\s\s\s\2\s\3\s\4\s\5\s\s\s\s{]} & {[}\s\1\s\2\s\3\s\s\s\5\s\s\s\s{]}\\
{[}\s\s\s\s\s\3\s\4\s\5\s\6\s\7{]} & {[}\s\1\s\s\s\s\s\4\s\5\s\s\s\7{]} & {[}\s\1\s\2\s\3\s\s\s\s\s\6\s\s{]} & {[}\s\1\s\2\s\3\s\s\s\s\s\s\s\s{]} & {[}\s\s\s\2\s\s\s\4\s\s\s\6\s\s{]} & {[}\s\1\s\s\s\3\s\4\s\5\s\6\s\7{]} & {[}\s\1\s\2\s\s\s\s\s\5\s\6\s\7{]}\\
{[}\s\s\s\s\s\s\s\4\s\s\s\6\s\7{]} & {[}\s\1\s\2\s\s\s\4\s\5\s\s\s\s{]} & {[}\s\s\s\2\s\3\s\4\s\s\s\6\s\s{]} & {[}\s\1\s\2\s\s\s\s\s\s\s\6\s\s{]} & {[}\s\1\s\s\s\s\s\4\s\s\s\6\s\7{]} & {[}\s\1\s\s\s\3\s\4\s\s\s\6\s\7{]} & {[}\s\s\s\s\s\s\s\4\s\5\s\6\s\s{]}\\
{[}\s\1\s\s\s\3\s\s\s\5\s\6\s\7{]} & {[}\s\1\s\s\s\s\s\4\s\5\s\s\s\s{]} & {[}\s\s\s\2\s\3\s\s\s\s\s\6\s\s{]} & {[}\s\1\s\2\s\s\s\s\s\s\s\s\s\s{]} & {[}\s\s\s\s\s\s\s\4\s\s\s\6\s\7{]} & {[}\s\s\s\2\s\s\s\4\s\5\s\s\s\s{]} & {[}\s\1\s\s\s\3\s\s\s\5\s\6\s\7{]}\\
{[}\s\1\s\s\s\s\s\s\s\5\s\6\s\7{]} & {[}\s\1\s\2\s\s\s\s\s\5\s\s\s\7{]} & {[}\s\1\s\s\s\3\s\4\s\s\s\6\s\s{]} & {[}\s\s\s\2\s\3\s\4\s\5\s\6\s\7{]} & {[}\s\1\s\s\s\s\s\4\s\s\s\6\s\s{]} & {[}\s\s\s\2\s\s\s\4\s\s\s\6\s\s{]} & {[}\s\s\s\s\s\s\s\4\s\5\s\s\s\7{]}\\
{[}\s\1\s\2\s\3\s\4\s\5\s\s\s\7{]} & {[}\s\1\s\s\s\s\s\s\s\5\s\6\s\7{]} & {[}\s\s\s\s\s\3\s\4\s\s\s\6\s\s{]} & {[}\s\s\s\2\s\s\s\4\s\5\s\6\s\7{]} & {[}\s\s\s\s\s\s\s\4\s\5\s\6\s\s{]} & {[}\s\s\s\s\s\3\s\4\s\5\s\6\s\7{]} & {[}\s\s\s\2\s\3\s\s\s\5\s\6\s\s{]}\\
{[}\s\s\s\s\s\3\s\s\s\5\s\6\s\7{]} & {[}\s\1\s\s\s\s\s\s\s\5\s\s\s\7{]} & {[}\s\1\s\s\s\3\s\s\s\s\s\6\s\s{]} & {[}\s\s\s\2\s\3\s\4\s\5\s\6\s\s{]} & {[}\s\1\s\2\s\3\s\s\s\5\s\6\s\7{]} & {[}\s\s\s\s\s\3\s\4\s\s\s\6\s\7{]} & {[}\s\s\s\s\s\s\s\4\s\5\s\s\s\s{]}\\
{[}\s\1\s\s\s\s\s\s\s\s\s\6\s\7{]} & {[}\s\1\s\2\s\s\s\s\s\5\s\6\s\s{]} & {[}\s\1\s\2\s\3\s\4\s\5\s\s\s\7{]} & {[}\s\s\s\2\s\3\s\s\s\5\s\6\s\7{]} & {[}\s\1\s\2\s\3\s\s\s\5\s\6\s\s{]} & {[}\s\1\s\s\s\3\s\4\s\5\s\6\s\s{]} & {[}\s\s\s\s\s\3\s\s\s\5\s\6\s\s{]}\\
{[}\s\s\s\s\s\s\s\s\s\5\s\6\s\7{]} & {[}\s\1\s\s\s\s\s\s\s\5\s\6\s\s{]} & {[}\s\s\s\2\s\3\s\4\s\5\s\s\s\7{]} & {[}\s\s\s\2\s\3\s\4\s\s\s\6\s\7{]} & {[}\s\1\s\2\s\3\s\s\s\s\s\6\s\7{]} & {[}\s\s\s\2\s\3\s\4\s\s\s\s\s\s{]} & {[}\s\s\s\2\s\3\s\s\s\5\s\s\s\7{]}\\
{[}\s\s\s\s\s\3\s\s\s\s\s\6\s\7{]} & {[}\s\1\s\2\s\s\s\s\s\5\s\s\s\s{]} & {[}\s\1\s\2\s\3\s\4\s\5\s\s\s\s{]} & {[}\s\s\s\2\s\3\s\4\s\s\s\6\s\s{]} & {[}\s\1\s\2\s\3\s\s\s\s\s\6\s\s{]} & {[}\s\1\s\s\s\3\s\4\s\s\s\6\s\s{]} & {[}\s\s\s\2\s\3\s\s\s\5\s\s\s\s{]}\\
{[}\s\1\s\s\s\3\s\4\s\5\s\s\s\7{]} & {[}\s\1\s\s\s\s\s\s\s\5\s\s\s\s{]} & {[}\s\1\s\s\s\3\s\4\s\5\s\s\s\s{]} & {[}\s\s\s\2\s\3\s\4\s\s\s\s\s\s{]} & {[}\s\s\s\2\s\3\s\s\s\5\s\6\s\7{]} & {[}\s\1\s\s\s\s\s\4\s\5\s\6\s\7{]} & {[}\s\s\s\s\s\3\s\s\s\5\s\s\s\7{]}\\
{[}\s\1\s\2\s\s\s\4\s\5\s\s\s\7{]} & {[}\s\1\s\2\s\3\s\4\s\s\s\6\s\7{]} & {[}\s\1\s\2\s\3\s\s\s\5\s\s\s\7{]} & {[}\s\s\s\2\s\s\s\4\s\s\s\6\s\s{]} & {[}\s\1\s\s\s\3\s\s\s\5\s\6\s\s{]} & {[}\s\s\s\s\s\3\s\4\s\s\s\6\s\s{]} & {[}\s\s\s\s\s\3\s\s\s\5\s\s\s\s{]}\\
{[}\s\1\s\s\s\3\s\s\s\5\s\s\s\7{]} & {[}\s\1\s\2\s\3\s\s\s\s\s\6\s\7{]} & {[}\s\1\s\2\s\3\s\4\s\s\s\s\s\7{]} & {[}\s\s\s\2\s\s\s\4\s\s\s\s\s\s{]} & {[}\s\s\s\2\s\3\s\s\s\s\s\6\s\7{]} & {[}\s\s\s\s\s\s\s\4\s\s\s\6\s\s{]} & {[}\s\s\s\2\s\s\s\s\s\5\s\6\s\7{]}\\
{[}\s\s\s\s\s\3\s\4\s\5\s\s\s\7{]} & {[}\s\1\s\2\s\3\s\4\s\s\s\s\s\7{]} & {[}\s\1\s\2\s\3\s\4\s\s\s\s\s\s{]} & {[}\s\s\s\2\s\3\s\s\s\s\s\6\s\7{]} & {[}\s\1\s\2\s\s\s\s\s\5\s\6\s\7{]} & {[}\s\1\s\s\s\3\s\4\s\5\s\s\s\7{]} & {[}\s\s\s\s\s\s\s\s\s\5\s\6\s\7{]}\\
{[}\s\1\s\s\s\s\s\4\s\5\s\s\s\7{]} & {[}\s\1\s\s\s\3\s\4\s\s\s\6\s\7{]} & {[}\s\s\s\2\s\3\s\4\s\s\s\s\s\7{]} & {[}\s\s\s\2\s\3\s\s\s\5\s\s\s\7{]} & {[}\s\s\s\2\s\s\s\s\s\5\s\6\s\7{]} & {[}\s\1\s\s\s\s\s\4\s\5\s\s\s\7{]} & {[}\s\s\s\2\s\s\s\s\s\5\s\s\s\7{]}\\
{[}\s\s\s\s\s\s\s\4\s\5\s\s\s\7{]} & {[}\s\1\s\2\s\3\s\s\s\s\s\s\s\7{]} & {[}\s\1\s\s\s\3\s\s\s\5\s\s\s\7{]} & {[}\s\s\s\2\s\3\s\s\s\5\s\6\s\s{]} & {[}\s\s\s\s\s\3\s\s\s\s\s\6\s\7{]} & {[}\s\1\s\s\s\3\s\4\s\5\s\s\s\s{]} & {[}\s\s\s\2\s\s\s\s\s\5\s\6\s\s{]}\\
{[}\s\s\s\s\s\s\s\s\s\s\s\6\s\7{]} & {[}\s\1\s\2\s\s\s\4\s\s\s\6\s\7{]} & {[}\s\1\s\2\s\3\s\s\s\s\s\s\s\s{]} & {[}\s\s\s\2\s\3\s\s\s\5\s\s\s\s{]} & {[}\s\s\s\s\s\s\s\s\s\5\s\6\s\7{]} & {[}\s\s\s\2\s\s\s\4\s\s\s\s\s\s{]} & {[}\s\s\s\s\s\s\s\s\s\5\s\6\s\s{]}\\
{[}\s\1\s\s\s\3\s\4\s\s\s\s\s\7{]} & {[}\s\1\s\2\s\s\s\s\s\s\s\6\s\7{]} & {[}\s\1\s\s\s\3\s\s\s\5\s\s\s\s{]} & {[}\s\s\s\2\s\3\s\s\s\s\s\s\s\7{]} & {[}\s\1\s\2\s\s\s\s\s\5\s\6\s\s{]} & {[}\s\1\s\s\s\3\s\4\s\s\s\s\s\7{]} & {[}\s\s\s\s\s\s\s\s\s\5\s\s\s\7{]}\\
{[}\s\1\s\s\s\3\s\s\s\s\s\s\s\7{]} & {[}\s\1\s\2\s\s\s\4\s\s\s\s\s\7{]} & {[}\s\s\s\s\s\3\s\4\s\5\s\s\s\7{]} & {[}\s\s\s\2\s\3\s\s\s\s\s\6\s\s{]} & {[}\s\s\s\2\s\3\s\s\s\s\s\6\s\s{]} & {[}\s\s\s\s\s\s\s\4\s\5\s\s\s\7{]} & {[}\s\s\s\2\s\s\s\s\s\5\s\s\s\s{]}\\
{[}\s\s\s\s\s\3\s\4\s\s\s\s\s\7{]} & {[}\s\1\s\2\s\s\s\4\s\s\s\s\s\s{]} & {[}\s\s\s\2\s\3\s\4\s\5\s\s\s\s{]} & {[}\s\s\s\2\s\3\s\s\s\s\s\s\s\s{]} & {[}\s\s\s\s\s\3\s\s\s\s\s\6\s\s{]} & {[}\s\1\s\s\s\3\s\4\s\s\s\s\s\s{]} & {[}\s\s\s\s\s\s\s\s\s\5\s\s\s\s{]}\\
{[}\s\1\s\s\s\s\s\4\s\s\s\s\s\7{]} & {[}\s\1\s\s\s\3\s\4\s\s\s\6\s\s{]} & {[}\s\s\s\2\s\3\s\4\s\s\s\s\s\s{]} & {[}\s\s\s\2\s\s\s\s\s\5\s\6\s\7{]} & {[}\s\1\s\2\s\s\s\s\s\s\s\6\s\7{]} & {[}\s\1\s\s\s\s\s\4\s\s\s\s\s\7{]} & {[}\s\1\s\2\s\3\s\4\s\s\s\6\s\7{]}\\
{[}\s\s\s\s\s\s\s\4\s\s\s\s\s\7{]} & {[}\s\1\s\s\s\s\s\4\s\s\s\6\s\7{]} & {[}\s\s\s\s\s\3\s\s\s\s\s\6\s\7{]} & {[}\s\s\s\2\s\s\s\s\s\5\s\6\s\s{]} & {[}\s\1\s\s\s\s\s\s\s\s\s\6\s\7{]} & {[}\s\1\s\s\s\s\s\4\s\s\s\s\s\s{]} & {[}\s\1\s\2\s\s\s\4\s\s\s\6\s\7{]}\\
{[}\s\1\s\s\s\s\s\s\s\5\s\s\s\7{]} & {[}\s\1\s\s\s\3\s\s\s\s\s\6\s\7{]} & {[}\s\s\s\s\s\3\s\s\s\s\s\6\s\s{]} & {[}\s\s\s\2\s\s\s\s\s\5\s\s\s\7{]} & {[}\s\s\s\2\s\s\s\s\s\s\s\6\s\7{]} & {[}\s\s\s\s\s\3\s\4\s\5\s\s\s\s{]} & {[}\s\s\s\2\s\3\s\4\s\s\s\6\s\7{]}\\
{[}\s\1\s\s\s\s\s\s\s\s\s\s\s\7{]} & {[}\s\1\s\s\s\s\s\4\s\s\s\6\s\s{]} & {[}\s\1\s\s\s\3\s\4\s\s\s\s\s\7{]} & {[}\s\s\s\2\s\s\s\s\s\5\s\s\s\s{]} & {[}\s\s\s\s\s\s\s\s\s\s\s\6\s\7{]} & {[}\s\s\s\s\s\s\s\4\s\5\s\s\s\s{]} & {[}\s\1\s\2\s\3\s\4\s\s\s\s\s\7{]}\\
{[}\s\s\s\2\s\3\s\s\s\5\s\s\s\7{]} & {[}\s\1\s\s\s\3\s\s\s\s\s\6\s\s{]} & {[}\s\s\s\s\s\3\s\4\s\s\s\s\s\7{]} & {[}\s\s\s\2\s\s\s\s\s\s\s\6\s\7{]} & {[}\s\1\s\2\s\s\s\s\s\s\s\6\s\s{]} & {[}\s\s\s\s\s\3\s\4\s\s\s\s\s\7{]} & {[}\s\1\s\2\s\s\s\4\s\s\s\s\s\7{]}\\
{[}\s\s\s\2\s\3\s\s\s\s\s\s\s\7{]} & {[}\s\1\s\s\s\s\s\s\s\s\s\6\s\7{]} & {[}\s\1\s\s\s\3\s\4\s\s\s\s\s\s{]} & {[}\s\s\s\2\s\s\s\s\s\s\s\s\s\7{]} & {[}\s\s\s\2\s\s\s\s\s\5\s\6\s\s{]} & {[}\s\s\s\s\s\s\s\4\s\s\s\s\s\7{]} & {[}\s\1\s\2\s\s\s\4\s\s\s\6\s\s{]}\\
{[}\s\s\s\s\s\3\s\s\s\5\s\s\s\7{]} & {[}\s\1\s\s\s\s\s\s\s\s\s\6\s\s{]} & {[}\s\1\s\s\s\3\s\s\s\s\s\s\s\7{]} & {[}\s\s\s\2\s\s\s\s\s\s\s\6\s\s{]} & {[}\s\1\s\s\s\s\s\s\s\5\s\6\s\s{]} & {[}\s\s\s\s\s\3\s\4\s\s\s\s\s\s{]} & {[}\s\1\s\2\s\3\s\s\s\s\s\6\s\7{]}\\
{[}\s\s\s\2\s\s\s\s\s\5\s\s\s\7{]} & {[}\s\1\s\s\s\3\s\4\s\s\s\s\s\7{]} & {[}\s\1\s\s\s\3\s\s\s\s\s\s\s\s{]} & {[}\s\s\s\2\s\s\s\s\s\s\s\s\s\s{]} & {[}\s\s\s\2\s\s\s\s\s\s\s\6\s\s{]} & {[}\s\s\s\s\s\s\s\4\s\s\s\s\s\s{]} & {[}\s\s\s\2\s\3\s\4\s\s\s\s\s\7{]}\\
{[}\s\s\s\2\s\s\s\s\s\s\s\s\s\7{]} & {[}\s\1\s\s\s\s\s\4\s\s\s\s\s\7{]} & {[}\s\s\s\s\s\3\s\4\s\5\s\s\s\s{]} & {[}\s\1\s\s\s\3\s\4\s\5\s\6\s\7{]} & {[}\s\1\s\s\s\s\s\s\s\s\s\6\s\s{]} & {[}\s\1\s\2\s\3\s\s\s\5\s\6\s\7{]} & {[}\s\s\s\2\s\3\s\s\s\s\s\6\s\7{]}\\
{[}\s\s\s\s\s\3\s\s\s\s\s\s\s\7{]} & {[}\s\1\s\s\s\3\s\4\s\s\s\s\s\s{]} & {[}\s\s\s\s\s\3\s\4\s\s\s\s\s\s{]} & {[}\s\1\s\s\s\s\s\4\s\5\s\6\s\7{]} & {[}\s\s\s\s\s\s\s\s\s\5\s\6\s\s{]} & {[}\s\1\s\s\s\3\s\s\s\5\s\6\s\7{]} & {[}\s\1\s\2\s\3\s\s\s\s\s\s\s\7{]}\\
{[}\s\s\s\s\s\s\s\s\s\5\s\s\s\7{]} & {[}\s\1\s\s\s\s\s\4\s\s\s\s\s\s{]} & {[}\s\s\s\2\s\3\s\s\s\5\s\s\s\7{]} & {[}\s\s\s\s\s\3\s\4\s\5\s\6\s\7{]} & {[}\s\s\s\s\s\s\s\4\s\s\s\6\s\s{]} & {[}\s\s\s\2\s\3\s\s\s\5\s\6\s\7{]} & {[}\s\s\s\2\s\3\s\s\s\s\s\s\s\7{]}\\
{[}\s\s\s\s\s\s\s\s\s\s\s\s\s\7{]} & {[}\s\1\s\2\s\s\s\s\s\s\s\s\s\7{]} & {[}\s\s\s\2\s\3\s\s\s\5\s\s\s\s{]} & {[}\s\s\s\s\s\s\s\4\s\5\s\6\s\7{]} & {[}\s\s\s\s\s\s\s\s\s\s\s\6\s\s{]} & {[}\s\s\s\s\s\3\s\s\s\5\s\6\s\7{]} & {[}\s\s\s\2\s\3\s\s\s\s\s\s\s\s{]}\\
{[}\s\1\s\2\s\3\s\4\s\5\s\6\s\s{]} & {[}\s\1\s\s\s\3\s\s\s\s\s\s\s\7{]} & {[}\s\s\s\s\s\3\s\s\s\5\s\s\s\7{]} & {[}\s\1\s\s\s\3\s\4\s\5\s\6\s\s{]} & {[}\s\1\s\2\s\3\s\4\s\5\s\s\s\7{]} & {[}\s\1\s\2\s\s\s\s\s\5\s\6\s\7{]} & {[}\s\1\s\2\s\s\s\s\s\s\s\6\s\7{]}\\
{[}\s\1\s\2\s\3\s\4\s\5\s\s\s\s{]} & {[}\s\1\s\s\s\3\s\s\s\s\s\s\s\s{]} & {[}\s\s\s\s\s\3\s\s\s\5\s\s\s\s{]} & {[}\s\s\s\s\s\3\s\4\s\5\s\6\s\s{]} & {[}\s\1\s\2\s\3\s\s\s\5\s\s\s\7{]} & {[}\s\1\s\2\s\3\s\s\s\s\s\6\s\7{]} & {[}\s\1\s\2\s\s\s\s\s\s\s\s\s\7{]}\\
{[}\s\1\s\s\s\3\s\4\s\5\s\6\s\s{]} & {[}\s\1\s\s\s\s\s\s\s\s\s\s\s\7{]} & {[}\s\s\s\2\s\3\s\s\s\s\s\s\s\7{]} & {[}\s\1\s\s\s\s\s\4\s\5\s\6\s\s{]} & {[}\s\1\s\2\s\3\s\4\s\s\s\s\s\7{]} & {[}\s\1\s\s\s\3\s\s\s\s\s\6\s\7{]} & {[}\s\1\s\2\s\s\s\s\s\s\s\6\s\s{]}\\
{[}\s\1\s\2\s\s\s\4\s\5\s\6\s\s{]} & {[}\s\1\s\s\s\s\s\s\s\s\s\s\s\s{]} & {[}\s\s\s\2\s\3\s\s\s\s\s\s\s\s{]} & {[}\s\1\s\s\s\3\s\4\s\s\s\6\s\7{]} & {[}\s\1\s\2\s\3\s\s\s\s\s\s\s\7{]} & {[}\s\1\s\2\s\3\s\s\s\5\s\6\s\s{]} & {[}\s\1\s\2\s\s\s\s\s\s\s\s\s\s{]}\\
{[}\s\1\s\s\s\3\s\4\s\5\s\s\s\s{]} & {[}\s\s\s\2\s\3\s\4\s\5\s\6\s\7{]} & {[}\s\s\s\s\s\3\s\s\s\s\s\s\s\7{]} & {[}\s\s\s\s\s\3\s\4\s\s\s\6\s\7{]} & {[}\s\1\s\2\s\s\s\4\s\5\s\s\s\7{]} & {[}\s\1\s\s\s\3\s\s\s\5\s\6\s\s{]} & {[}\s\s\s\2\s\s\s\4\s\s\s\6\s\7{]}\\
{[}\s\1\s\2\s\s\s\4\s\5\s\s\s\s{]} & {[}\s\s\s\2\s\s\s\4\s\5\s\6\s\7{]} & {[}\s\s\s\s\s\3\s\s\s\s\s\s\s\s{]} & {[}\s\1\s\s\s\3\s\4\s\s\s\6\s\s{]} & {[}\s\1\s\2\s\s\s\s\s\5\s\s\s\7{]} & {[}\s\1\s\2\s\3\s\s\s\s\s\6\s\s{]} & {[}\s\s\s\2\s\s\s\4\s\s\s\s\s\7{]}\\
{[}\s\s\s\2\s\3\s\4\s\5\s\6\s\s{]} & {[}\s\s\s\2\s\3\s\4\s\5\s\s\s\7{]} & {[}\s\1\s\2\s\s\s\4\s\5\s\6\s\7{]} & {[}\s\1\s\s\s\s\s\4\s\s\s\6\s\s{]} & {[}\s\1\s\2\s\s\s\4\s\s\s\s\s\7{]} & {[}\s\1\s\s\s\s\s\s\s\5\s\6\s\7{]} & {[}\s\1\s\s\s\3\s\4\s\s\s\6\s\7{]}\\
{[}\s\s\s\2\s\s\s\4\s\5\s\6\s\s{]} & {[}\s\s\s\2\s\3\s\4\s\s\s\6\s\7{]} & {[}\s\1\s\2\s\s\s\4\s\s\s\6\s\7{]} & {[}\s\1\s\s\s\3\s\s\s\5\s\6\s\7{]} & {[}\s\1\s\2\s\s\s\s\s\s\s\s\s\7{]} & {[}\s\1\s\2\s\3\s\s\s\5\s\s\s\7{]} & {[}\s\s\s\2\s\s\s\4\s\s\s\s\s\s{]}\\
{[}\s\1\s\s\s\s\s\4\s\5\s\6\s\s{]} & {[}\s\s\s\2\s\s\s\4\s\5\s\s\s\7{]} & {[}\s\1\s\s\s\s\s\4\s\5\s\6\s\7{]} & {[}\s\1\s\s\s\3\s\s\s\s\s\6\s\7{]} & {[}\s\1\s\2\s\3\s\4\s\5\s\s\s\s{]} & {[}\s\1\s\s\s\3\s\s\s\5\s\s\s\7{]} & {[}\s\s\s\2\s\s\s\s\s\s\s\s\s\7{]}\\
{[}\s\1\s\s\s\s\s\4\s\5\s\s\s\s{]} & {[}\s\s\s\s\s\3\s\4\s\5\s\6\s\7{]} & {[}\s\1\s\2\s\s\s\4\s\5\s\s\s\7{]} & {[}\s\1\s\s\s\3\s\s\s\5\s\6\s\s{]} & {[}\s\1\s\2\s\3\s\s\s\5\s\s\s\s{]} & {[}\s\1\s\s\s\3\s\s\s\s\s\6\s\s{]} & {[}\s\s\s\2\s\s\s\s\s\s\s\6\s\s{]}\\
{[}\s\1\s\2\s\3\s\s\s\5\s\6\s\s{]} & {[}\s\s\s\s\s\s\s\4\s\5\s\6\s\7{]} & {[}\s\1\s\s\s\s\s\4\s\s\s\6\s\7{]} & {[}\s\s\s\s\s\3\s\4\s\s\s\6\s\s{]} & {[}\s\1\s\2\s\3\s\4\s\s\s\s\s\s{]} & {[}\s\1\s\2\s\3\s\s\s\5\s\s\s\s{]} & {[}\s\s\s\2\s\s\s\s\s\s\s\s\s\s{]}\\
{[}\s\1\s\s\s\3\s\s\s\5\s\6\s\s{]} & {[}\s\s\s\2\s\3\s\4\s\s\s\s\s\7{]} & {[}\s\1\s\2\s\s\s\s\s\5\s\6\s\7{]} & {[}\s\s\s\s\s\s\s\4\s\5\s\6\s\s{]} & {[}\s\1\s\2\s\3\s\s\s\s\s\s\s\s{]} & {[}\s\1\s\2\s\3\s\s\s\s\s\s\s\7{]} & {[}\s\1\s\s\s\s\s\4\s\s\s\6\s\7{]}\\
{[}\s\s\s\2\s\3\s\4\s\5\s\s\s\s{]} & {[}\s\s\s\2\s\s\s\4\s\s\s\6\s\7{]} & {[}\s\1\s\s\s\s\s\s\s\5\s\6\s\7{]} & {[}\s\1\s\s\s\s\s\s\s\5\s\6\s\7{]} & {[}\s\1\s\2\s\s\s\4\s\5\s\s\s\s{]} & {[}\s\1\s\s\s\3\s\s\s\5\s\s\s\s{]} & {[}\s\s\s\s\s\3\s\4\s\s\s\6\s\7{]}\\
{[}\s\s\s\2\s\3\s\s\s\5\s\6\s\s{]} & {[}\s\s\s\2\s\s\s\4\s\s\s\s\s\7{]} & {[}\s\1\s\2\s\s\s\4\s\s\s\s\s\7{]} & {[}\s\1\s\s\s\s\s\s\s\5\s\6\s\s{]} & {[}\s\1\s\2\s\s\s\4\s\s\s\s\s\s{]} & {[}\s\s\s\s\s\3\s\s\s\5\s\s\s\s{]} & {[}\s\s\s\s\s\s\s\4\s\s\s\6\s\7{]}\\
{[}\s\s\s\2\s\s\s\4\s\5\s\s\s\s{]} & {[}\s\s\s\2\s\3\s\4\s\5\s\6\s\s{]} & {[}\s\1\s\s\s\s\s\4\s\5\s\s\s\7{]} & {[}\s\s\s\s\s\s\s\4\s\s\s\6\s\s{]} & {[}\s\1\s\2\s\s\s\s\s\5\s\s\s\s{]} & {[}\s\1\s\s\s\3\s\s\s\s\s\s\s\7{]} & {[}\s\1\s\s\s\3\s\s\s\s\s\6\s\7{]}\\
{[}\s\s\s\s\s\3\s\4\s\5\s\6\s\s{]} & {[}\s\s\s\2\s\s\s\4\s\5\s\6\s\s{]} & {[}\s\1\s\2\s\s\s\s\s\5\s\s\s\7{]} & {[}\s\s\s\s\s\3\s\s\s\5\s\6\s\7{]} & {[}\s\1\s\2\s\s\s\s\s\s\s\s\s\s{]} & {[}\s\s\s\s\s\3\s\s\s\s\s\s\s\7{]} & {[}\s\s\s\s\s\3\s\s\s\s\s\6\s\7{]}\\
{[}\s\s\s\s\s\3\s\s\s\5\s\6\s\s{]} & {[}\s\s\s\s\s\3\s\4\s\5\s\s\s\7{]} & {[}\s\s\s\2\s\s\s\4\s\5\s\6\s\7{]} & {[}\s\s\s\s\s\3\s\s\s\5\s\6\s\s{]} & {[}\s\s\s\2\s\3\s\4\s\5\s\s\s\7{]} & {[}\s\1\s\2\s\3\s\s\s\s\s\s\s\s{]} & {[}\s\1\s\s\s\s\s\s\s\s\s\6\s\7{]}\\
{[}\s\1\s\2\s\3\s\s\s\5\s\s\s\s{]} & {[}\s\s\s\2\s\3\s\s\s\5\s\6\s\7{]} & {[}\s\s\s\2\s\s\s\4\s\5\s\s\s\7{]} & {[}\s\1\s\s\s\3\s\s\s\s\s\6\s\s{]} & {[}\s\1\s\s\s\3\s\4\s\s\s\s\s\s{]} & {[}\s\1\s\s\s\3\s\s\s\s\s\s\s\s{]} & {[}\s\s\s\s\s\s\s\s\s\s\s\6\s\s{]}\\
{[}\s\1\s\s\s\3\s\s\s\5\s\s\s\s{]} & {[}\s\s\s\s\s\3\s\s\s\5\s\6\s\7{]} & {[}\s\s\s\2\s\s\s\4\s\s\s\6\s\7{]} & {[}\s\s\s\s\s\3\s\s\s\s\s\6\s\s{]} & {[}\s\1\s\s\s\3\s\s\s\s\s\s\s\s{]} & {[}\s\s\s\s\s\3\s\s\s\s\s\s\s\s{]} & {[}\s\1\s\s\s\3\s\4\s\s\s\s\s\7{]}\\
{[}\s\s\s\s\s\3\s\4\s\5\s\s\s\s{]} & {[}\s\s\s\2\s\3\s\4\s\s\s\6\s\s{]} & {[}\s\s\s\2\s\s\s\4\s\s\s\s\s\7{]} & {[}\s\1\s\s\s\s\s\s\s\s\s\6\s\s{]} & {[}\s\1\s\s\s\s\s\4\s\s\s\s\s\s{]} & {[}\s\1\s\2\s\s\s\s\s\s\s\6\s\7{]} & {[}\s\1\s\s\s\3\s\s\s\s\s\s\s\7{]}\\
{[}\s\s\s\2\s\3\s\s\s\5\s\s\s\s{]} & {[}\s\s\s\s\s\3\s\4\s\5\s\6\s\s{]} & {[}\s\1\s\2\s\s\s\4\s\5\s\6\s\s{]} & {[}\s\s\s\s\s\s\s\s\s\5\s\6\s\7{]} & {[}\s\s\s\2\s\3\s\4\s\s\s\s\s\7{]} & {[}\s\1\s\s\s\s\s\s\s\s\s\6\s\7{]} & {[}\s\s\s\s\s\3\s\4\s\s\s\s\s\7{]}\\
{[}\s\s\s\s\s\3\s\s\s\5\s\s\s\s{]} & {[}\s\s\s\s\s\s\s\4\s\5\s\s\s\s{]} & {[}\s\1\s\2\s\s\s\4\s\s\s\6\s\s{]} & {[}\s\s\s\s\s\s\s\s\s\5\s\6\s\s{]} & {[}\s\1\s\s\s\s\s\s\s\5\s\s\s\7{]} & {[}\s\1\s\2\s\s\s\s\s\5\s\s\s\7{]} & {[}\s\s\s\s\s\3\s\s\s\s\s\s\s\7{]}\\

        {...} & {...} & {...} & {...} & {...} & {...} & {...}\\
        \hline
    \end{tabular}

\end{table}

\begin{table}[]
    \scriptsize
    \sffamily
    \centering
    \hspace*{-1.4cm}
    \begin{tabular}{|c|c|c|c|c|c|c|}
        \hline
        \rule[-1.5ex]{0pt}{2pt} $L_1$ \rule{0pt}{3ex} & $L_2$ & $L_3$ & $L_4$ & $L_5$ & $L_6$ & $L_7$ \\
        {...} & {...} & {...} & {...} & {...} & {...} & {...}\\
{[}\s\1\s\s\s\s\s\s\s\5\s\6\s\s{]} & {[}\s\s\s\s\s\3\s\4\s\s\s\6\s\7{]} & {[}\s\1\s\2\s\s\s\4\s\5\s\s\s\s{]} & {[}\s\s\s\s\s\s\s\s\s\s\s\6\s\7{]} & {[}\s\s\s\2\s\s\s\4\s\5\s\s\s\7{]} & {[}\s\1\s\s\s\s\s\s\s\5\s\s\s\7{]} & {[}\s\1\s\s\s\s\s\4\s\s\s\s\s\7{]}\\
{[}\s\s\s\2\s\s\s\s\s\5\s\6\s\s{]} & {[}\s\s\s\s\s\3\s\4\s\s\s\6\s\s{]} & {[}\s\1\s\2\s\s\s\4\s\s\s\s\s\s{]} & {[}\s\s\s\s\s\s\s\s\s\s\s\6\s\s{]} & {[}\s\1\s\s\s\s\s\s\s\5\s\s\s\s{]} & {[}\s\s\s\2\s\s\s\s\s\5\s\s\s\7{]} & {[}\s\1\s\s\s\s\s\s\s\s\s\s\s\7{]}\\
{[}\s\s\s\2\s\s\s\s\s\5\s\s\s\s{]} & {[}\s\s\s\s\s\3\s\4\s\s\s\s\s\7{]} & {[}\s\1\s\2\s\s\s\s\s\5\s\6\s\s{]} & {[}\s\1\s\s\s\3\s\4\s\5\s\s\s\7{]} & {[}\s\1\s\s\s\s\s\s\s\s\s\s\s\s{]} & {[}\s\s\s\s\s\s\s\s\s\5\s\s\s\7{]} & {[}\s\s\s\s\s\s\s\4\s\s\s\s\s\7{]}\\
{[}\s\1\s\s\s\s\s\s\s\5\s\s\s\s{]} & {[}\s\s\s\s\s\s\s\4\s\s\s\6\s\7{]} & {[}\s\1\s\s\s\s\s\4\s\s\s\s\s\7{]} & {[}\s\1\s\s\s\3\s\4\s\5\s\s\s\s{]} & {[}\s\s\s\2\s\3\s\4\s\5\s\s\s\s{]} & {[}\s\s\s\2\s\s\s\s\s\s\s\6\s\7{]} & {[}\s\s\s\s\s\s\s\s\s\s\s\s\s\s{]}\\
{[}\s\s\s\s\s\s\s\4\s\5\s\6\s\s{]} & {[}\s\s\s\s\s\3\s\4\s\s\s\s\s\s{]} & {[}\s\1\s\2\s\s\s\s\s\5\s\s\s\s{]} & {[}\s\s\s\s\s\3\s\4\s\5\s\s\s\7{]} & {[}\s\s\s\2\s\s\s\4\s\5\s\s\s\s{]} & {[}\s\s\s\s\s\s\s\s\s\s\s\6\s\7{]} & \s\s\s\s\s\s\s\s\s\\
{[}\s\s\s\s\s\s\s\4\s\5\s\s\s\s{]} & {[}\s\s\s\s\s\s\s\4\s\s\s\6\s\s{]} & {[}\s\1\s\2\s\s\s\s\s\s\s\6\s\s{]} & {[}\s\1\s\s\s\s\s\4\s\5\s\s\s\7{]} & {[}\s\s\s\2\s\s\s\4\s\s\s\s\s\7{]} & {[}\s\1\s\2\s\s\s\s\s\s\s\s\s\7{]} & \s\s\s\s\s\s\s\s\s\\
{[}\s\s\s\s\s\s\s\s\s\5\s\6\s\s{]} & {[}\s\s\s\s\s\s\s\4\s\s\s\s\s\7{]} & {[}\s\1\s\2\s\s\s\s\s\s\s\s\s\s{]} & {[}\s\1\s\s\s\s\s\4\s\5\s\s\s\s{]} & {[}\s\s\s\2\s\3\s\4\s\s\s\s\s\s{]} & {[}\s\1\s\s\s\s\s\s\s\s\s\s\s\7{]} & \s\s\s\s\s\s\s\s\s\\
{[}\s\s\s\s\s\s\s\s\s\5\s\s\s\s{]} & {[}\s\s\s\s\s\s\s\4\s\s\s\s\s\s{]} & {[}\s\s\s\2\s\s\s\4\s\5\s\6\s\s{]} & {[}\s\1\s\s\s\3\s\s\s\5\s\s\s\7{]} & {[}\s\s\s\s\s\s\s\4\s\s\s\s\s\s{]} & {[}\s\s\s\2\s\s\s\s\s\s\s\s\s\7{]} & \s\s\s\s\s\s\s\s\s\\
{[}\s\1\s\2\s\3\s\4\s\s\s\6\s\s{]} & {[}\s\s\s\2\s\3\s\s\s\s\s\6\s\7{]} & {[}\s\s\s\2\s\s\s\4\s\5\s\s\s\s{]} & {[}\s\s\s\s\s\3\s\s\s\5\s\s\s\7{]} & {[}\s\s\s\2\s\3\s\s\s\5\s\s\s\7{]} & {[}\s\s\s\s\s\s\s\s\s\s\s\s\s\7{]} & \s\s\s\s\s\s\s\s\s\\
{[}\s\s\s\2\s\3\s\4\s\s\s\6\s\s{]} & {[}\s\s\s\2\s\3\s\s\s\5\s\6\s\s{]} & {[}\s\1\s\s\s\s\s\4\s\5\s\6\s\s{]} & {[}\s\1\s\s\s\s\s\s\s\5\s\s\s\7{]} & {[}\s\s\s\2\s\3\s\s\s\5\s\s\s\s{]} & {[}\s\1\s\2\s\s\s\s\s\5\s\6\s\s{]} & \s\s\s\s\s\s\s\s\s\\
{[}\s\1\s\2\s\s\s\4\s\s\s\6\s\s{]} & {[}\s\s\s\2\s\3\s\s\s\s\s\6\s\s{]} & {[}\s\1\s\s\s\s\s\4\s\s\s\6\s\s{]} & {[}\s\1\s\s\s\3\s\s\s\5\s\s\s\s{]} & {[}\s\s\s\2\s\s\s\s\s\5\s\s\s\7{]} & {[}\s\1\s\s\s\s\s\s\s\5\s\6\s\s{]} & \s\s\s\s\s\s\s\s\s\\
{[}\s\1\s\s\s\3\s\4\s\s\s\6\s\s{]} & {[}\s\s\s\s\s\3\s\s\s\5\s\6\s\s{]} & {[}\s\1\s\s\s\s\s\s\s\s\s\6\s\s{]} & {[}\s\1\s\s\s\s\s\s\s\5\s\s\s\s{]} & {[}\s\s\s\2\s\s\s\s\s\5\s\s\s\s{]} & {[}\s\1\s\2\s\s\s\s\s\5\s\s\s\s{]} & \s\s\s\s\s\s\s\s\s\\
{[}\s\s\s\s\s\3\s\4\s\s\s\6\s\s{]} & {[}\s\s\s\s\s\3\s\s\s\s\s\6\s\7{]} & {[}\s\1\s\s\s\s\s\4\s\5\s\s\s\s{]} & {[}\s\1\s\s\s\s\s\s\s\s\s\s\s\7{]} & {[}\s\s\s\2\s\3\s\s\s\s\s\s\s\7{]} & {[}\s\1\s\s\s\s\s\s\s\5\s\s\s\s{]} & \s\s\s\s\s\s\s\s\s\\
{[}\s\1\s\2\s\3\s\s\s\s\s\6\s\s{]} & {[}\s\s\s\s\s\3\s\s\s\s\s\6\s\s{]} & {[}\s\1\s\s\s\s\s\4\s\s\s\s\s\s{]} & {[}\s\s\s\s\s\s\s\4\s\5\s\s\s\7{]} & {[}\s\s\s\2\s\3\s\s\s\s\s\s\s\s{]} & {[}\s\1\s\2\s\s\s\s\s\s\s\6\s\s{]} & \s\s\s\s\s\s\s\s\s\\
{[}\s\1\s\2\s\3\s\4\s\s\s\s\s\s{]} & {[}\s\s\s\s\s\3\s\s\s\5\s\s\s\7{]} & {[}\s\1\s\s\s\s\s\s\s\s\s\s\s\s{]} & {[}\s\1\s\s\s\s\s\s\s\s\s\s\s\s{]} & {[}\s\s\s\2\s\s\s\s\s\s\s\s\s\7{]} & {[}\s\1\s\2\s\s\s\s\s\s\s\s\s\s{]} & \s\s\s\s\s\s\s\s\s\\
{[}\s\1\s\2\s\s\s\4\s\s\s\s\s\s{]} & {[}\s\s\s\s\s\3\s\s\s\s\s\s\s\7{]} & {[}\s\s\s\2\s\s\s\4\s\s\s\6\s\s{]} & {[}\s\s\s\s\s\3\s\4\s\5\s\s\s\s{]} & {[}\s\s\s\2\s\s\s\s\s\s\s\s\s\s{]} & {[}\s\s\s\2\s\s\s\s\s\5\s\s\s\s{]} & \s\s\s\s\s\s\s\s\s\\
{[}\s\s\s\2\s\3\s\4\s\s\s\s\s\s{]} & {[}\s\s\s\s\s\3\s\s\s\s\s\s\s\s{]} & {[}\s\s\s\s\s\s\s\4\s\5\s\6\s\7{]} & {[}\s\s\s\s\s\3\s\4\s\s\s\s\s\s{]} & {[}\s\s\s\s\s\s\s\s\s\5\s\s\s\7{]} & {[}\s\s\s\s\s\s\s\s\s\s\s\s\s\s{]} & \s\s\s\s\s\s\s\s\s\\
{[}\s\1\s\2\s\3\s\s\s\s\s\s\s\s{]} & {[}\s\s\s\2\s\s\s\s\s\5\s\6\s\7{]} & {[}\s\s\s\s\s\s\s\4\s\5\s\6\s\s{]} & {[}\s\s\s\s\s\3\s\s\s\s\s\s\s\s{]} & {[}\s\s\s\s\s\3\s\s\s\5\s\s\s\s{]} & \s\s\s\s\s\s\s\s\s & \s\s\s\s\s\s\s\s\s\\
{[}\s\s\s\2\s\3\s\s\s\s\s\6\s\s{]} & {[}\s\s\s\s\s\s\s\s\s\5\s\6\s\7{]} & {[}\s\s\s\s\s\s\s\4\s\s\s\6\s\7{]} & {[}\s\s\s\s\s\s\s\4\s\5\s\s\s\s{]} & {[}\s\s\s\s\s\s\s\s\s\5\s\s\s\s{]} & \s\s\s\s\s\s\s\s\s & \s\s\s\s\s\s\s\s\s\\
{[}\s\1\s\s\s\s\s\4\s\s\s\6\s\s{]} & {[}\s\s\s\2\s\s\s\s\s\5\s\6\s\s{]} & {[}\s\s\s\s\s\s\s\s\s\s\s\6\s\s{]} & {[}\s\s\s\s\s\s\s\4\s\s\s\s\s\7{]} & \s\s\s\s\s\s\s\s\s & \s\s\s\s\s\s\s\s\s & \s\s\s\s\s\s\s\s\s\\
{[}\s\s\s\2\s\3\s\s\s\s\s\s\s\s{]} & {[}\s\s\s\s\s\s\s\s\s\5\s\6\s\s{]} & {[}\s\s\s\2\s\s\s\4\s\s\s\s\s\s{]} & {[}\s\s\s\s\s\s\s\4\s\s\s\s\s\s{]} & \s\s\s\s\s\s\s\s\s & \s\s\s\s\s\s\s\s\s & \s\s\s\s\s\s\s\s\s\\
{[}\s\s\s\2\s\s\s\4\s\s\s\6\s\s{]} & {[}\s\s\s\s\s\s\s\s\s\5\s\s\s\s{]} & {[}\s\s\s\2\s\s\s\s\s\s\s\s\s\s{]} & {[}\s\s\s\s\s\s\s\s\s\5\s\s\s\s{]} & \s\s\s\s\s\s\s\s\s & \s\s\s\s\s\s\s\s\s & \s\s\s\s\s\s\s\s\s\\
{[}\s\s\s\2\s\s\s\s\s\s\s\6\s\s{]} & {[}\s\s\s\2\s\s\s\s\s\s\s\6\s\7{]} & {[}\s\s\s\s\s\s\s\4\s\5\s\s\s\7{]} & {[}\s\s\s\s\s\s\s\s\s\s\s\s\s\7{]} & \s\s\s\s\s\s\s\s\s & \s\s\s\s\s\s\s\s\s & \s\s\s\s\s\s\s\s\s\\
{[}\s\s\s\2\s\s\s\4\s\s\s\s\s\s{]} & {[}\s\s\s\2\s\s\s\s\s\s\s\6\s\s{]} & \s\s\s\s\s\s\s\s\s & \s\s\s\s\s\s\s\s\s & \s\s\s\s\s\s\s\s\s & \s\s\s\s\s\s\s\s\s & \s\s\s\s\s\s\s\s\s\\
{[}\s\s\s\2\s\s\s\s\s\s\s\s\s\s{]} & {[}\s\s\s\s\s\s\s\s\s\s\s\6\s\7{]} & \s\s\s\s\s\s\s\s\s & \s\s\s\s\s\s\s\s\s & \s\s\s\s\s\s\s\s\s & \s\s\s\s\s\s\s\s\s & \s\s\s\s\s\s\s\s\s\\
{[}\s\1\s\s\s\3\s\4\s\s\s\s\s\s{]} & {[}\s\s\s\s\s\s\s\s\s\s\s\s\s\s{]} & \s\s\s\s\s\s\s\s\s & \s\s\s\s\s\s\s\s\s & \s\s\s\s\s\s\s\s\s & \s\s\s\s\s\s\s\s\s & \s\s\s\s\s\s\s\s\s\\
{[}\s\s\s\s\s\3\s\4\s\s\s\s\s\s{]} & \s\s\s\s\s\s\s\s\s & \s\s\s\s\s\s\s\s\s & \s\s\s\s\s\s\s\s\s & \s\s\s\s\s\s\s\s\s & \s\s\s\s\s\s\s\s\s & \s\s\s\s\s\s\s\s\s\\
{[}\s\1\s\s\s\s\s\4\s\s\s\s\s\s{]} & \s\s\s\s\s\s\s\s\s & \s\s\s\s\s\s\s\s\s & \s\s\s\s\s\s\s\s\s & \s\s\s\s\s\s\s\s\s & \s\s\s\s\s\s\s\s\s & \s\s\s\s\s\s\s\s\s\\
{[}\s\1\s\s\s\3\s\s\s\s\s\6\s\s{]} & \s\s\s\s\s\s\s\s\s & \s\s\s\s\s\s\s\s\s & \s\s\s\s\s\s\s\s\s & \s\s\s\s\s\s\s\s\s & \s\s\s\s\s\s\s\s\s & \s\s\s\s\s\s\s\s\s\\
{[}\s\s\s\s\s\3\s\s\s\s\s\6\s\s{]} & \s\s\s\s\s\s\s\s\s & \s\s\s\s\s\s\s\s\s & \s\s\s\s\s\s\s\s\s & \s\s\s\s\s\s\s\s\s & \s\s\s\s\s\s\s\s\s & \s\s\s\s\s\s\s\s\s\\
{[}\s\1\s\s\s\3\s\s\s\s\s\s\s\s{]} & \s\s\s\s\s\s\s\s\s & \s\s\s\s\s\s\s\s\s & \s\s\s\s\s\s\s\s\s & \s\s\s\s\s\s\s\s\s & \s\s\s\s\s\s\s\s\s & \s\s\s\s\s\s\s\s\s\\
{[}\s\s\s\s\s\3\s\s\s\s\s\s\s\s{]} & \s\s\s\s\s\s\s\s\s & \s\s\s\s\s\s\s\s\s & \s\s\s\s\s\s\s\s\s & \s\s\s\s\s\s\s\s\s & \s\s\s\s\s\s\s\s\s & \s\s\s\s\s\s\s\s\s\\
{[}\s\s\s\s\s\s\s\4\s\s\s\6\s\s{]} & \s\s\s\s\s\s\s\s\s & \s\s\s\s\s\s\s\s\s & \s\s\s\s\s\s\s\s\s & \s\s\s\s\s\s\s\s\s & \s\s\s\s\s\s\s\s\s & \s\s\s\s\s\s\s\s\s\\
{[}\s\s\s\s\s\s\s\4\s\s\s\s\s\s{]} & \s\s\s\s\s\s\s\s\s & \s\s\s\s\s\s\s\s\s & \s\s\s\s\s\s\s\s\s & \s\s\s\s\s\s\s\s\s & \s\s\s\s\s\s\s\s\s & \s\s\s\s\s\s\s\s\s\\
{[}\s\1\s\s\s\s\s\s\s\s\s\6\s\s{]} & \s\s\s\s\s\s\s\s\s & \s\s\s\s\s\s\s\s\s & \s\s\s\s\s\s\s\s\s & \s\s\s\s\s\s\s\s\s & \s\s\s\s\s\s\s\s\s & \s\s\s\s\s\s\s\s\s\\
{[}\s\1\s\s\s\s\s\s\s\s\s\s\s\s{]} & \s\s\s\s\s\s\s\s\s & \s\s\s\s\s\s\s\s\s & \s\s\s\s\s\s\s\s\s & \s\s\s\s\s\s\s\s\s & \s\s\s\s\s\s\s\s\s & \s\s\s\s\s\s\s\s\s\\
{[}\s\s\s\s\s\s\s\s\s\s\s\6\s\s{]} & \s\s\s\s\s\s\s\s\s & \s\s\s\s\s\s\s\s\s & \s\s\s\s\s\s\s\s\s & \s\s\s\s\s\s\s\s\s & \s\s\s\s\s\s\s\s\s & \s\s\s\s\s\s\s\s\s\\
    
        \hline
        
    \end{tabular}
    
    \vspace{5mm}
    
    \caption{Linear orders $L_1, L_2, L_3, L_4, L_5, L_6, L_7$ on all subsets of $[7]$ forming the local realizer of $\lat{7}$. Each column corresponds to one partial linear order. The greatest element in an order is the top one.}\label{tab:partial_realizer_7}
\end{table}

\end{document}